\newcommand{\eps}{\varepsilon}
\newcommand{\R}{\mathbb{R}}
\newcommand{\Z}{\mathbb{Z}}
\newcommand{\C}{\mathbb{C}}
\newcommand{\N}{\mathbb{N}}
\newcommand{\rmi}{\mathrm{i}}
\newcommand{\rme}{\mathrm{e}}
\newcommand{\rmO}{\mathrm{O}}
\newcommand{\rmo}{\mathrm{o}}
\newtheorem{Lemma}{Lemma}[section]
\newtheorem*{Lemma*}{Lemma}
\newtheorem{Theorem}{Theorem}
\newtheorem*{Theorem*}{Theorem}
\newtheorem{Proposition}[Lemma]{Proposition}
\newtheorem*{Proposition*}{Proposition}
\newtheorem{Remark}[Lemma]{Remark}
\newtheorem{Hypothesis}[Lemma]{Hypothesis}
\newtheorem*{Hypothesis*}{Hypothesis}
\title{
Can large inhomogeneities generate target patterns?}
\thanks{ORCID: 0000-0002-7724-3794 \\
This work is supported by NSF DMS-1911742.}
\author{Gabriela Jaramillo}
\email{gabriela@math.uh.edu}
\address{University of Houston, Department of Mathematics,\\
 3551 Cullen Blvd., 
 Houston, TX 77204-3008, USA\\}
\begin{document}
'\nolinenumbers
\begin{abstract}
We study the existence of target patterns in oscillatory media with weak local coupling
 and in the presence of an impurity, or defect.
We model these systems using a viscous eikonal equation posed on the plane, and represent
the defect as a perturbation. In contrast to previous results we consider large defects, 
which we describe using a  function with slow algebraic decay, 
i.e. $g \sim \rmO(1/|x|^m)$ for $m \in (1,2]$.
We prove that these defects are able to generate target patterns
 and that, just as in the case of strongly localized impurities, 
 their frequency  is small beyond all orders of the small parameter describing their strength.
Our analysis consists of finding two approximations to target pattern solutions, one which  
is valid at intermediate scales and a second one which is valid in the far field. 
This is done using weighted Sobolev spaces, which allow us to recover Fredholm properties of the
relevant linear operators, as well as the implicit function theorem, which is then used to prove existence. 
By matching the intermediate and far field approximations we then determine the 
 frequency of the pattern that is selected by the system.

\end{abstract}

\maketitle

\vspace*{0.2in}

{\small

{\bf AMS subject classification:} 35B36, 35B40, 35Q56, 35Q92

{\bf Keywords:} Target pattern, spiral waves, bound states of Schr\"odinger equation.

}

\vspace*{0.2in}

\section{Introduction}

Target patterns are coherent structures that emerge in excitable and in oscillatory media.
They are characterized by concentric waves that expand away from a center, or core region,
 creating a `bull's-eye' pattern.
 Although often associated with the  Belousov-Zhabotinsky reaction \cite{zaikin1970}, 
 they also appear
in colonies of slime mold \cite{alcantara1974, durston1974}, in the oxidation of carbon monoxide on platinum \cite{stich2004}, and in brain tissue \cite{townsend2015}.

In this paper we will  focus on target patterns that arise in oscillatory media,
where three key mechanisms, or processes, contribute to their formation.
The first mechanism is associated with the intrinsic dynamics of the system, which must support a limit cycle 
that results in uniform time oscillations. The second 
is a transport process that allows for different spatial regions to interact,
such as diffusion in chemical reactions, or coupling between neurons in brain tissue.
While these two processes are enough to generate traveling and spiral waves,
to obtain target patterns one needs a third ingredient, a defect.
Indeed, it is believed that the role of defects, or impurities, is to
alter the dynamics of the system in a localized area
resulting in a change in the frequency of the time oscillations. 
As a consequence, these defects act as pacemakers entraining the rest of the medium and forming 
target patterns.

While experiments and previous analytical results  confirm
 that small localized defects give rise to these patterns, 
\cite{tyson1980, kopell1981target, hagan1981, kuramoto2003, hendrey2000, stich2002, stich2004, kollar2007, jaramillo2016, jaramillo2018},
in this paper we want to determine the exact level of localization that is needed to generate them.
In particular, assuming the inhomogeneity is modeled as a function with 
algebraic decay of order $\rmO(1/|x|^m)$, we want to determine how small we can take $m>0$ and still obtain
a well defined target pattern.

To simplify the analysis we concentrate only on systems which involve weak local coupling.
Because it is well known that under this assumption
 the amplitude of oscillations is tied, or enslaved, to the dynamics of the phase,
this allows us to focus our analysis on this  last variable.
Indeed, the results presented in \cite{doelman2009} show that 
 coherent structures in these systems are well described by
the following viscous eikonal equation 
\begin{equation}\label{e:viscous}
 \phi_t = \Delta \phi - b |\nabla \phi |^2 - \eps g(x), \quad x \in \R^2,
 \end{equation}
where the perturbation, $\eps g$, represents the defect.
 The above expression is derived using a multiple scale analysis and
 it therefore models phase changes that occur over long spatial and time scales.
 In this context, target patterns then correspond to solutions of the form 
 $\phi(x,t) = \tilde{\phi}(x) - \Omega t$, satisfying the boundary condition $\nabla \phi \to k$ 
 as $|x| \to \infty$, where the constant $k$ then represents the pattern's wavenumber.

Our motivation for considering large inhomogeneities is three fold.
First, in all previous work
the level of localization imposed on the inhomogeneity was tied to the tools used 
to prove the existence of these patterns. 
Yet, numerical simulation 
 like the ones presented here in Section \ref{s:simulations},
show that these assumptions can be relaxed. 
For example, in \cite{stich2006} 
defects are modeled as functions with compact support
and target pattern solutions are found using separation of variables.
In contrast, in \cite{kollar2007} the authors use spatial dynamics to prove the existence of these patterns.
This then allows them to model the impurities as radially symmetric functions with exponential decay.
 In \cite{jaramillo2018}, thanks to the use of weighted Sobolev
spaces, this assumption is relaxed
and general (non-radially symmetric) inhomogeneities 
with  decay of order $\rmO(1/|x|^{m})$, $m>2$, are considered.

Although using different approaches,  the references mentioned above show that 
target patterns can only be generated by inhomogeneities with 
a postive and finite mass $M = \int_{\R^2} g$.
This obviously restricts  the level of decay of $g$ to be of order $\rmo(1/|x|^2)$.
However, our numerical simulations show that  one 
can obtain target patterns even in the case when the defect is assumed to decay
only at order $\rmO(1/|x|^{m})$, for $m \in (1,2]$.
We are therefore interested in proving the existence of target patterns
for these `large' inhomogeneities of infinite mass.

Our  second reason for considering this problem
is tied to the existence of spiral waves in oscillatory media with nonlocal coupling.
In \cite{jaramillo2022} it was shown that  the dynamics of these patterns are well described by the following amplitude equation
\[ 0 = K \ast w + \lambda w + \alpha |w|^2w + \rmO(\eps), \quad w \in \C, \qquad 0< \eps <<1\]
where $w$ is a radially symmetric complex-valued function, and
$K$ is a symmetric convolution kernel of diffusive type.
Additional assumptions on $K$ imply that formally one can write this operator
 as $(1- \eps  \Delta_1)^{-1}\sigma \Delta_1$, and suggest
preconditioning the above equation with $(1-\eps  \Delta_1)$, where $\Delta_1 = \partial_{rr} + \frac{1}{r} \partial_r - \frac{1}{r^2}$.
This then results in the following expression, which perhaps not surprisingly resembles the complex Ginzburg-Landau equation,
\[ 0 = \beta \Delta_1 w + \lambda w + \alpha |w|^2w + \rmO(\eps), \quad \beta = ( \sigma - \eps \lambda), \quad r \in [0,\infty). \]

From there, a similar multiple-scale analysis as the one carried out  in \cite{doelman2009} and that we also summarize in  Appendix, gives a hierarchy of equations at different powers of a small parameter $\delta= \delta(\eps)$.
 In particular, at order $\delta^2$ one finds  the steady state viscous eikonal equation, 
\begin{equation}\label{e:steadyeik}
 -\Omega = \Delta_1 \phi - b | \nabla \phi|^2 - \delta^2 g,
 \end{equation}
as a description of  the phase dynamics of spiral waves.
However, in contrast to the case of target patterns, here the inhomogeneity does not represent a defect, 
but is instead related to the small variations, $\rho$, of the amplitude of the pattern.
More precisely, $g \sim ( 1- \rho^2)$. 
Although not immediately obvious, one can check using numerical simulations
that the perturbation $(1- \rho^2)$ decays at infinity at order $\rmO(1/|x|^2)$ (see the Appendix A).
Therefore, the particular viscous eikonal equation that is connected to the phase dynamics
of spiral waves in these systems
is the same equation that we are trying to solve.

Finally, our third motivation comes from the following change of variables, $\phi= -\frac{1}{b} \log(\Psi)$, 
which transforms the steady state viscous eikonal equation, \eqref{e:steadyeik}, into
a Schr\"odinger eigenvalue problem with potential $\eps g$,
\[ \Omega \Psi = \Delta \Psi + \eps g(x) \Psi. \]
The transformation also shows that our target pattern solutions correspond to bound states of this 
operator. The only result solving the above eigenvalue problem
 that we are aware of is that of Simon \cite{simon1979},
who proved that in the two dimensional case and
under the assumption of localized potentials, i.e. $\int_{\R^2} g(x) (1+x^2) \;dx< \infty$,
 bound states exists if and only if the mass  $\int_{\R^2} g(x) \;dx > 0$.

Notice that in the context of the Schr\"odinger operator, 
our problem corresponds to the `supercritical' case,
in the sense that the potential, $g$, no longer corresponds to a bounded perturbation of 
the Laplacian. To see this, fix $g (x) = 1/(1+ |x|)^m$ with $1< m \leq 2$, and consider the rescaling $ y = \gamma x$. 
The Schr\"odinger operator then reads 
$ \gamma^2 \Delta_y \Psi + \eps \frac{\gamma^m}{( \gamma+  |y|)^m} \Psi$,
and it is then clear that if we choose $\gamma$ small, the potential is actually `large' in the far field.
Consequently, the results from \cite{simon1979} no  longer apply for the case considered here.

In this paper we show that target pattern solutions
to the viscous eikonal equation, or equivalently, bound states to the above Schr\"odinger operator, exists
even for these large inhomogeneities.
As with small defects, we prove that target pattern solutions have frequencies, $\Omega$, that 
are small beyond all orders of the parameter $\eps$. 
Consequently one cannot use a regular perturbation expansion to justify existence.
To resolve this issue we first find two approximations to target patterns,
 one  which is valid at intermediate scales and second one that accounts for the far field behavior of the solution.
By matching these two approximations we are then  able to
determine the unique value of the frequency selected by the system.

 It is in the course of this analysis that one sees that
the slow decay rate of the inhomogeneity plays a major role in shaping 
the solution at intermediate scales.
This is the main difference between the analysis presented here and that  of \cite{jaramillo2018}, 
where inhomogeneities of finite mass are considered.
It is also why we will split defects into a core region and a far field region,
reflecting the fact that 
the defects we work with are still too small to alter the shape of the pattern at large scales,
but do contribute to the form of the equation at intermediate scales.
In particular, we write the impurity as the sum two functions, 
defined as
\begin{equation}\label{e:g}
 g_c = (1-\chi^D) g \qquad g_f= \chi^D g,
 \end{equation}
where $\chi^D$ is a $C^{\infty}$ radial cut-off function, with 
$\chi^D(|x|) =0$ for $|x|<D$ and $\chi^D(|x|) =1$ for $|x|>2D$. 
To prove the existence of target patterns, the value of the parameter
$D$ can remain arbitrary, so long as it is a finite number. 
This follows because even though in the above definition 
the function $g_c$ has compact support, 
our results hold for more general `core' functions. The only requirement being
that this core defect has finite mass.
We therefore make the following assumption.

\begin{Hypothesis}\label{h:g}
The inhomogeneity, $g$, lives in $H_{ \sigma} ^k(\R^2)$, with $k\geq2$ and $\sigma \in (0,1)$,
is radially symmetric, and positive. In addition, the defect can be split into the sum of  two positive functions, $g_c, g_f$, satisfying
\begin{itemize}
\setlength \itemsep{2ex}
\item The function $g_f$ is in $ H^k_\sigma(\R^2)$ for $0<\sigma<1$. In particular,
 $g_f \sim \rmO(1/r^m)$ as $r \to \infty$, with $1< m \leq 2$, while near the origin  $g_f(|x|) =0$ for $|x|<1$.
\item The function $g_c$ is in $H^k_\gamma(\R^2)$ for $\gamma>1$. In particular,
$g_c \sim \rmO(1/r^{d})$ with $d>2$ as $r \to \infty$.
\end{itemize}
\end{Hypothesis}

\begin{Remark}
The spaces $H^k_\sigma(\R^2)$, with $\sigma \in \R$, are weighted Sobolev  spaces
with norm
\[ \|u\|_{H^{k}_\sigma(\R^2)} = \sum_{|\alpha|\leq s} \| ( 1+ |x|^2)^{\sigma/2} D^\alpha u(x)\|_{L^2(\R^2)}.\]
Notice that for positive values of $\sigma$, they impose a level of  decay on functions.
For a precise definition of these spaces see Section \ref{s:preliminaries}.
\end{Remark}

With the above hypothesis and the approach just described, we prove the following result.

\begin{Theorem}\label{t:existence}
Let $k \geq 2$ and $\sigma \in (0, 1)$ and consider a function $g \in H^k_{r,\sigma}(\R^2)$ satisfying Hypothesis \ref{h:g}. Then, there exists a constant $\eps_0>0$ and a $C^1 ([0,\eps_0))$  family of eigenfunctions $\phi = \phi(r; \eps)$ and eigenvalues $\Omega= \Omega(\eps)$ that bifurcate from zero and solve
the equation
\[\Delta_0 \phi - b(\partial_r \phi) ^2 - \eps g(r) + \Omega =0 \qquad r= |x| \in [0,\infty).\]
Moreover, this family has the form
\[ 
\phi(r;\eps) = -\frac{1}{b} \chi^1(\Lambda r) \log( K_0(\Lambda r)) + \phi_1(r; \eps ) + \eps c , \qquad \Lambda^2 = b \Omega(\eps) \]
where
\begin{enumerate}[i)]
\item $c$ is a constant that depends on the initial conditions of the problem,
\item $K_0(z)$ represents the zeroth-order Modified Bessel function of the second kind,
\item $ \partial_r \phi_1 \in H^{k}_{r,\sigma}(\R^2) $, and
\item $\Omega= \Omega(\eps) = C(\eps) 4 e^{-2 \gamma_\eps}\exp[ 2/ a ]$, with
\[a = -\eps b \int_0^\infty g_c(r) \;r \;dr,\]
 and $C(\eps)$ a $C^1$ constant that depends on $\eps$.
\end{enumerate}
\end{Theorem}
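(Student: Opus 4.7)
The plan is to peel off the exponentially decaying far-field tail using the Cole-Hopf transform of the decaying Bessel solution, and then reduce the remaining problem to a Fredholm problem on a weighted Sobolev space whose scalar solvability condition selects $\Omega$.

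First I would substitute the ansatz $\phi = \phi_{\mathrm{ff}}(r;\Lambda) + \phi_1(r) + \eps c$ with $\phi_{\mathrm{ff}}(r;\Lambda) = -\frac{1}{b}\chi(\Lambda r)\log K_0(\Lambda r)$ into the steady viscous eikonal equation. The motivation is that $\phi = -\frac{1}{b}\log\Psi$ is the Cole-Hopf transform that linearises the eikonal equation, and $\Psi = K_0(\Lambda r)$ is the decaying radial solution of the resulting Helmholtz equation $(\Delta_0 - \Lambda^2)\Psi = 0$, so $\phi_{\mathrm{ff}}$ exactly solves the target equation with zero source in the far field $\{r \geq 2/\Lambda\}$, leaving only a smooth compactly supported error on the annulus where $\chi'$ is active. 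The residual equation for $\phi_1$ then takes the schematic form
\[ \Delta_0 \phi_1 - 2b(\partial_r\phi_{\mathrm{ff}})(\partial_r\phi_1) - b(\partial_r\phi_1)^2 = \eps g + R(\Lambda, r), \]
where $R$ is supported at intermediate scales and small as $\Lambda \to 0$.

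Next I would set up a Fredholm framework for this residual. Differentiating once in $r$ and working with $u = \partial_r\phi_1 \in H^{k-1}_{r,\sigma}(\R^2)$ for $\sigma \in (0,1)$, the linearisation becomes a perturbation of the radial operator $\Delta_1 = \partial_{rr} + r^{-1}\partial_r - r^{-2}$, whose Fredholm properties on these weighted spaces are provided by Section \ref{s:preliminaries}. The splitting $g = g_c + g_f$ from Hypothesis \ref{h:g} is crucial: although $g_f$ has infinite mass, its derivative $\partial_r g_f$ gains one order of decay and lies in $H^{k-1}_\sigma$, while the finite-mass core $g_c \in H^{k-1}_\gamma$ with $\gamma > 1$ contributes the integral that eventually sets the frequency. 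Applying the implicit function theorem in this weighted space then yields, for each fixed $(\Lambda,\eps)$, a unique $\phi_1 = \phi_1(\Lambda,\eps)$ satisfying the residual equation modulo a one-dimensional cokernel obstruction.

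The third step is to convert this cokernel obstruction into a scalar matching equation for $\Omega$. The cokernel of $\Delta_1$ on the chosen weighted space pairs with a forcing through a weighted radial integral, and evaluating this pairing splits into three contributions: the finite core integral $a = -\eps b \int_0^\infty g_c(r)\, r\, dr$; the slowly decaying contribution of $g_f$, which after renormalisation produces the $\eps$-dependent constants $C(\eps)$ and $\gamma_\eps$; and the small-argument expansion $K_0(z) \sim -\log(z/2) - \gamma_e$ of the far-field ansatz at the origin, which couples the pairing to $\log\Lambda$. Setting the pairing to zero yields, to leading order as $\Lambda \to 0$, a scalar equation of the form
\[ \log\Lambda \;=\; -\gamma_\eps + \tfrac12 \log C(\eps) + \log 2 + \tfrac{1}{a}, \]
which exponentiates to $\Omega = \Lambda^2/b = C(\eps)\, 4 e^{-2\gamma_\eps}\exp[2/a]$ as claimed.

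The main obstacle is that $\Omega$ is small beyond all polynomial orders in $\eps$, so the implicit function theorem cannot be applied naively in $\eps$ with $\Omega$ viewed as a function of $\eps$. I would bypass this by treating $\Lambda$ (equivalently $\log\Lambda$) as an \emph{independent} unknown and imposing the matching equation as a second scalar relation, running the IFT in the reparametrised variables $(\phi_1, \Lambda, \eps)$; only at the end does one solve the matching equation to obtain $\Lambda = \Lambda(\eps)$, from which the $C^1$ dependence on $\eps$ follows. A related subtlety is that the coefficient $2b(\partial_r\phi_{\mathrm{ff}})$ in the linearisation is not small uniformly in scale as $\Lambda \to 0$, so I must check that it defines a relatively compact (or at least bounded) perturbation of $\Delta_1$ on the weighted space, uniformly in small $\Lambda > 0$, so that the Fredholm index and cokernel dimension persist throughout the matching procedure.
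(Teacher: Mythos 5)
Your far-field ansatz, the preconditioning by an invertible first-order operator, and the idea of keeping $\Lambda$ as a free parameter to be fixed only at the end are all consistent with the paper's Sections \ref{s:far} and \ref{ss:existence}. The genuine gap is your frequency-selection mechanism. You claim the residual problem for $\phi_1$ carries a one-dimensional cokernel obstruction whose vanishing selects $\Omega$; on the weighted spaces where the data actually lives, no such obstruction exists. After preconditioning, the far-field linearization $\mathrm{Id}+\mathcal{L}_{2\Lambda}^{-1}\bigl[-2b\partial_r\phi_0-2\Lambda\bigr]$ is invertible for every small fixed $\Lambda>0$ (Lemma \ref{l:Llambda}), so equation \eqref{e:far} is solvable for \emph{all} small $(\Lambda,\eps)$ with no scalar condition left over --- that is precisely Theorem \ref{t:far}, which therefore cannot determine $\Omega$. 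If instead you work with $\Delta_0$ (or, after differentiating the equation once, with $\Delta_1$ acting on $u=\partial_r\phi_1$) on Kondratiev spaces, Proposition \ref{p:Delta_nFredholm} provides a cokernel only for range weights $\gamma>1$ (resp.\ $\gamma>2$), whereas the slowly decaying tail $g_f\sim r^{-m}$, $1<m\le 2$, lies only in weights $\sigma<1$, where the operator is onto; and the putative pairing of the forcing with the cokernel element is a divergent integral precisely because $g_f$ has infinite mass. So the relation $\log\Lambda\sim 1/a$ cannot be extracted from such a pairing, and your attribution of $\gamma_\eps$ and $C(\eps)$ to a ``renormalised'' contribution of $g_f$ is also off: $\gamma_\eps$ is the Euler--Mascheroni constant, entering through the small-argument expansion of $K_0$.

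What is missing is the entire intermediate-scale construction, which is where the selection actually happens in the paper. Since $\Omega$ is small beyond all orders, one drops it and solves \eqref{e:intermediate} with the genuinely nonlinear first-order approximation $\phi_0=-\frac{1}{b}\chi_M\log\bigl(1+a\log r+\eps K(r)\bigr)$, where $K$ absorbs the infinite-mass tail via $\Delta_0K+bg_f=0$ (solvable exactly because $\sigma<1$ gives surjectivity), and the constant $a=-\eps b\int_0^\infty g_c(r)\,r\,dr$ arises from a bordering-lemma step applied only to the finite-mass core $g_c\in H^k_\gamma$, $\gamma>1$ (Theorem \ref{t:inter}). The frequency is then obtained by \emph{matching} $\partial_r\phi_{\mathrm{int}}$ with $\partial_r\phi_{\mathrm{far}}$ in the overlap region $r\sim \eps\eta/\Lambda$ (Subsection \ref{ss:match}): equating the $\rmO(1/r)$ terms and using $K_0(z)\sim-\log(z/2)-\gamma_e$ gives $-1/r\sim(-\log(\Lambda/2)-\gamma_e)\,a/r$, i.e.\ $\Lambda\sim 2\rme^{-\gamma_e}\rme^{1/a}$; with this $C^1$ function $\Lambda(\eps)$, satisfying $\Lambda(0)=\partial_\eps\Lambda(0)=0$ so that $D_\psi F(0;0)=\mathrm{Id}$, the implicit function theorem is rerun on the far-field operator to prove Theorem \ref{t:existence}. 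Without an intermediate ansatz there is no source for the constant $a$ nor for the logarithmic coupling between the core and $\Lambda$, so your outline as written cannot produce item (iv). Two smaller points: rewriting $\Delta_0\phi_1=(\partial_r+\frac{1}{r})\partial_r\phi_1$ does not differentiate the source, so the forcing contains $g$ itself rather than $\partial_r g_f$ (you only gain that extra decay by differentiating the whole equation, at the price of the $\Delta_1$ weight restrictions above); and the non-smallness you worry about in $2b\,\partial_r\phi_{\mathrm{ff}}$ is handled in the paper by adding and subtracting $2\Lambda\psi$, since $-2b\,\partial_r\phi_0-2\Lambda=2\Lambda\bigl[\chi'\log K_0+\chi K_0'/K_0-1\bigr]$ is $\rmO(\Lambda)$ times a bounded function.
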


\begin{Remark}

Notice that under  Hypothesis \ref{h:g}
the viscous eikonal equation, \eqref{e:viscous}, is invariant under rotations. 
As a result we can 
look for solutions that are radially symmetric. This assumption is made mainly for convenience, 
and one can follow the steps in \cite{jaramillo2018} to tackle the more general case of non-symmetric
inhomogeneties. 
\end{Remark}

\begin{Remark}\label{r:impurity}
If the inhomogeneity $g$ has strong algebraic decay, i.e. $g(r) \sim 1/r^m$ with $m>2$, then
we are back in the regime considered in \cite{jaramillo2018}.
In this case, the impurity 
has finite mass and there is no need to split this function into the sum
of its  core and far field functions. In fact, one can set $g =g_c = g_f$ and the 
above theorem is equivalent to Theorem 1 in \cite{jaramillo2018} with $ a = - \eps b \int_0^\infty g(r) r\;dr < \infty$.
\end{Remark}

\begin{Remark}\label{r:D}
While the exact form of the cut-off function $\chi^D$ appearing in the definition of $g_c$
 is not important for the proof of existence,
it does play a role when approximating the pattern's frequency, $\Omega$.
As our numerical simulations show, there is an optimal way of picking the parameter $D$
that allows one to obtain better estimates for the frequency,
 see Section \ref{s:simulations}.
If  a non-optimal choice is made,
 one can improve the estimates for $\Omega$ by using higher order approximations for the intermediate and far field solutions when carrying out the matched asymptotics, see Section \ref{s:existence}.

\end{Remark}

We close this section with some comments regarding the mathematical tools used in this paper.
As in reference \cite{jaramillo2018}, the proof of existence of solutions is based on the implicit function theorem.
This requires that the linearization about our first order approximation, $\phi= \phi_0$, be an invertible, or at least Fredholm
operator with closed range and finite dimensional kernel and cokernel. However, because the 
 equations are posed on the plane, we obtain linear operators that are second order differential operator 
 with essential  spectrum near the origin. In addition, the translational symmetry of the system implies that
these maps have a zero eigenvalue at the origin.
Consequently,  these operators are not invertible and they do not have a closed range when posed as 
maps between standard Sobolev spaces. To overcome this difficulty and
recover  Fredholm properties for these maps, we work instead with weighted Sobolev spaces.
In particular, we make use of the results from \cite{mcowen1979}, 
where  Fredholm properties for the Laplace operator are derived.
For other instances where this approach is used to prove existence of patterns  see
references  \cite{jaramillo2015, jaramillo2016, jaramillo2018, jaramillo2019}.

\subsection{Outline:}
The paper is organized as follows.
In Section \ref{s:preliminaries} we introduce a special class of weighted Sobolev spaces and summarize
 Fredholm properties for the Laplacian and related operators.
 In Section \ref{s:inter} we work with our model \eqref{e:viscous}
  and derive from it an equation that is valid at intermediate scales. 
  We then prove existence of solutions to this equation that are bounded near the origin
and that have appropriate growth conditions.
Next, in Section \ref{s:far} we work with the full model  \eqref{e:viscous} and,
treating the frequency as a parameter,
 find a first order approximation to target pattern solutions. 
Then, in Subsection \ref{ss:match} we  use matched asymptotics  to determine the value of the frequency, $\Omega$,
selected by the system. More precisely, we show that $\Omega$ is a $C^1$ function of the parameter $\eps$.
This then allows us to prove existence of solutions using the implicit function theorem.
 The analysis is complemented by numerical simulations presented in Section \ref{s:simulations},
and a discussion in Section \ref{s:discussion}.

\section{Preliminaries}\label{s:preliminaries}

In this section two different classes of Sobolev spaces are introduced, weighted Sobolev spaces and Kondratiev spaces. We also look at Fredholm properties for the specific operators that will appear in later sections. We will see how these properties depend on the weighted spaces used to define the domain and range of these operators. Throughout this section we use the symbol $\langle x \rangle = ( 1+ | x|^2)^{1/2}$, which appears in the definition of the norms for the weighted Sobolev spaces introduced.

\subsection{Weighted Sobolev Spaces}

Let $s$ be a nonnegative integer, $ p \in (1,\infty)$, and $ \gamma $ a real number. We denote by $W^{s,p}_{\gamma}(\R^d)$ the space of functions formed by taking the completion of  $C_0^\infty(\R^d,\C)$ under the norm
\[ \|u\|_{W^{s,p}_\gamma(\R^d)} = \sum_{|\alpha|\leq s} \| \langle x \rangle^\gamma D^\alpha u(x)\|_{L^p(\R^d)}.\]
When $p=2$ we let  $ W^{s,2}_\gamma(\R^d)= H^s_\gamma(\R^d)$. In this case these spaces are also Hilbert spaces, with inner product defined in the natural way by
 \[ \langle f, g \rangle = \sum_{|\alpha| \leq s} \int_{\R^d} f(x) \bar{g} (x) \;\langle x \rangle^{2\gamma} \;dx\]
where the overbar denotes the complex conjugate.

Notice in particular that depending on the sign of the weight $\gamma$, the functions in these spaces are either allowed to grow ($\gamma<0$ ), or forced to decay ($\gamma>0)$.
We also have natural embeddings, with $W^{s,p}_\gamma(\R^d) \subset W^{s,p}_\sigma(\R^d)$ provided $\gamma >\sigma $, and $W^{s,p}_\gamma(\R^d) \subset W^{k,p}_\gamma(\R^d)$ whenever $s>k$. For $1<p< \infty$ we can also identify the dual, $(W^{s,p}_\gamma(\R^d))^*$, with the space $W^{-s,q}_{-\gamma}(\R^d)$, where $p$ and $q$ conjugate exponents.

{\bf Kondratiev Spaces:}
With $s, p$ and $\gamma$ as in the previous section, we define Kondratiev spaces as the completion of $C_0^\infty(\R^d,\C)$ functions under the norm
 \[ \|u\|_{M^{s,p}_\gamma(\R^d)} = \sum_{|\alpha|\leq s} \| \langle x \rangle^{\gamma+|\alpha|} D^\alpha u(x)\|_{L^p(\R^d)}\]
and denote them by the symbol $M^{s,p}_{\gamma}(\R^d)$.

Again we see that these spaces are Hilbert spaces when $p=2$, with inner product given by
 \[ \langle f, g \rangle = \sum_{|\alpha| \leq s} \int_{\R^d} f(x) \bar{g} (x) \;\langle x \rangle^{2(\gamma+|\alpha|) } \;dx.\] 
We also have the following natural embeddings. One can check that $M^{s,p}_{\gamma}(\R^d) \subset M^{k,p}_\gamma(\R^d) $ whenever $s>k$, and $M^{s,p}_{\gamma}(\R^d) \subset M^{s,p}_\sigma(\R^d) $ provided $\gamma> \sigma$. 
In addition, as in the case of standard Sobolev spaces, one can identify the dual space $(M^{s,p}_{\gamma}(\R^d))^*$ with $M^{-s,q}_{-\gamma}(\R^d)$, where $p$ and $q$ are conjugate exponents.

As was the case with the weighted Sobolev spaces defined above, Kondratiev spaces encode growth or decay depending on the sign of  $\gamma$. However, in contrast to $W^{s,p}_\gamma(\R^d)$, Kondratiev spaces enforce a specific  \emph{algebraic growth} or \emph{algebraic decay} depending on the value of $\gamma$. In addition, we have the following result which summarizes decay properties for functions in $M^{s,p}_\gamma(\R^d)$ in terms of $\gamma$.

\begin{Lemma}\label{l:decay}
Let $d, m \in \N$ with $m>(d-1)/2$. Then,
 for $\gamma>-d/2$, and for all $f \in M^{m+1,2}_{\gamma}(\R^d)$  there is a constant $C>0$ such that $$|f(x)| \leq C \|\nabla f\|^\delta_{L^2_{\gamma+1}(\R^d)}\|f \|^{1-\delta}_{M^{m+1,2}_{\gamma}(\R^d)} \; |x|^{-\gamma- d/2},$$
 whenever $|x|$ is large and where $\delta = (d-1)/2m$.

\end{Lemma}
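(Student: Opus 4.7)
The plan is to convert the integral information encoded in the Kondratiev and weighted $L^2$ norms into the stated pointwise bound via a standard localization-and-rescaling argument. Fix $x_0 \in \R^d$ with $|x_0|$ large and consider the ball $B(x_0,|x_0|/2)$, on which the weight $\langle x \rangle$ is comparable to $|x_0|$. Set $u(y) = f(x_0 + \tfrac{|x_0|}{2}\,y)$ for $y \in B_1 \subset \R^d$, so that $u(0) = f(x_0)$ and, by the chain rule, $D^\alpha_y u(y) = (|x_0|/2)^{|\alpha|} (D^\alpha_x f)(x_0 + \tfrac{|x_0|}{2}\,y)$.

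A direct change of variables combined with the equivalence $\langle x\rangle \sim |x_0|$ on this ball yields the two scale-invariant bounds
\[
\|u\|_{H^{m+1}(B_1)} \leq C |x_0|^{-\gamma - d/2} \|f\|_{M^{m+1,2}_\gamma(\R^d)}, \qquad \|\nabla u\|_{L^2(B_1)} \leq C |x_0|^{-\gamma - d/2} \|\nabla f\|_{L^2_{\gamma+1}(\R^d)}.
\]
Both norms rescale by the \emph{same} factor $|x_0|^{-\gamma - d/2}$: this reflects the defining feature of the Kondratiev space $M^{m+1,2}_\gamma$, in which each additional derivative comes with an extra weight $\langle x \rangle$ that exactly cancels the $(|x_0|/2)^{|\alpha|}$ produced by the chain rule. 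The hypothesis $\gamma > -d/2$ makes this exponent strictly negative, which is what forces genuine decay.

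Next, the hypothesis $m > (d-1)/2$ gives $m + 1 > (d+1)/2 > d/2$, so that $H^{m+1}(B_1) \hookrightarrow C^0(B_1)$ by Sobolev embedding. Interpolating this embedding with the low-order control provided by $\|\nabla u\|_{L^2(B_1)}$ yields a Gagliardo--Nirenberg inequality of the form
\[
\|u\|_{L^\infty(B_1)} \leq C \|\nabla u\|_{L^2(B_1)}^{\delta} \|u\|_{H^{m+1}(B_1)}^{1-\delta}
\]
with the specific weight $\delta = (d-1)/(2m)$ arising from the interpolation exponent that places an intermediate Sobolev space of order $(d+1)/2$ between $H^1(B_1)$ and $H^{m+1}(B_1)$. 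Substituting the rescaling bounds into this inequality and evaluating at $y = 0$ produces
\[
|f(x_0)| = |u(0)| \leq C |x_0|^{-\gamma - d/2} \|\nabla f\|_{L^2_{\gamma+1}(\R^d)}^{\delta} \|f\|_{M^{m+1,2}_\gamma(\R^d)}^{1-\delta},
\]
which is the claim.

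The only delicate step is identifying the correct Gagliardo--Nirenberg inequality on $B_1$ whose exponents realize $\delta = (d-1)/(2m)$; the change-of-variables computation itself is mechanical. It is worth noting, however, that because both weighted norms on the right-hand side contribute the same factor $|x_0|^{-\gamma - d/2}$ under the rescaling, the decay rate $|x|^{-\gamma - d/2}$ in the conclusion is actually independent of $\delta$, and only the relative balance of the two norms on the right-hand side is sensitive to the particular choice of interpolation.
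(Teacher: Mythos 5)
Your rescaling computation is correct: on $B(x_0,|x_0|/2)$ the weight $\langle x\rangle$ is comparable to $|x_0|$, and both local norms of $u(y)=f(x_0+\tfrac{|x_0|}{2}y)$ indeed pick up the common factor $|x_0|^{-\gamma-d/2}$. The gap is the interpolation inequality you then invoke on the unit ball,
$\|u\|_{L^\infty(B_1)} \leq C \|\nabla u\|_{L^2(B_1)}^{\delta}\,\|u\|_{H^{m+1}(B_1)}^{1-\delta}$:
this inequality is false. Test it on $u\equiv 1$ (or on $u=1+\eps v$): the left side is $1$ while the right side vanishes (or tends to $0$ with $\eps^{\delta}$), because the gradient factor is zero for constants and $H^{m+1}(B_1)$ contains constants. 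There is no Gagliardo--Nirenberg inequality on a bounded domain in which the low-order factor is a pure gradient seminorm, unless one restricts to mean-zero functions or adds a zero-order term; this is exactly the Poincar\'e obstruction. Note also that constants are not excluded at the local level — membership of $f$ in $M^{m+1,2}_{\gamma}(\R^d)$ with $\gamma>-d/2$ rules out global constants, but your step is a purely local inequality on $B_1$ and cannot see that. A secondary issue: the exponent $\delta=(d-1)/(2m)$ is not the one a $d$-dimensional interpolation between $H^1(B_1)$ and $H^{m+1}(B_1)$ would produce (scaling forces $(2m+2-d)/(2m)$ there); $(d-1)/(2m)$ is intrinsically a $(d-1)$-dimensional exponent.

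This is precisely where the paper's proof differs. It does not localize: it writes $f(\theta,R)=\int_{\infty}^{R}\partial_r f(\theta,s)\,ds$ along rays (legitimate by density of $C_0^\infty$ and $\gamma>-d/2$, which also makes the resulting radial integral converge), and after Minkowski and Cauchy--Schwarz obtains $\|f(\cdot,R)\|_{L^2(\Sigma)}\leq C R^{-\gamma-d/2}\|\nabla f\|_{L^2_{\gamma+1}(\R^d)}$, together with analogous bounds for the angular derivatives $D^k_\Sigma f$ in terms of the Kondratiev norm. It then interpolates on the sphere $\Sigma$ of dimension $d-1$ via the Adams--Fournier inequality $\|u\|_{L^\infty(\Sigma)}\leq C\|u\|_{W^{m,2}(\Sigma)}^{\delta}\|u\|_{L^2(\Sigma)}^{1-\delta}$; this is where $\delta=(d-1)/(2m)$ and the hypothesis $m>(d-1)/2$ come from, and the low-order factor there is an $L^2$ norm of $f$ itself (already controlled by the gradient through the radial step), so constants cause no trouble. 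Your localization could be salvaged to give the decay rate $|x|^{-\gamma-d/2}$ with the full Kondratiev norm, by interpolating $\|u\|_{L^\infty(B_1)}$ between $\|u\|_{H^{m+1}(B_1)}$ and $\|u\|_{L^2(B_1)}$ and using $\|u\|_{L^2(B_1)}\lesssim |x_0|^{-\gamma-d/2}\|f\|_{L^2_{\gamma}}$, but it cannot produce the stated factorization with $\|\nabla f\|_{L^2_{\gamma+1}}$; recovering that factor requires the connection to infinity that the ball argument discards.
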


\begin{proof}
Let $(\theta, r)$ represent spherical coordinates in $\R^d$, with $r$ being the radial direction and $\theta$ representing the coordinates in the unit sphere, $\Sigma$. Then, 
\begin{align*}
 \int_\Sigma |f(\theta,R) |^2\; d\theta &
  \leq \int_\Sigma \left( \int_{\infty}^R |\partial_r f (\theta,s) | \;ds \right)^2 \;d\theta\\[2ex]
 \|f(\cdot, R)\|_{L^2(\Sigma)} &  \leq \left[ \int_\Sigma \left (  \int^{\infty}_R s^{\alpha} s^{\gamma+1} |\partial_r f (\theta,s) | s^{(d-1)/2}\;ds  \right)^2\;d\theta \right]^{1/2},
\end{align*}
where $\alpha =  -(\gamma+1)+ (1-d)/2$ and $R$ is fixed.
 Since the inner integral is squared, we also switched the bounds of integration. 
 Then, applying  Minkowski's inequality for integrals \cite[Theorem 6.19]{folland1999} followed by Cauchy Schwartz,
\begin{align*}
\|f(\cdot, R)\|_{L^2(\Sigma)} & \leq  \int^{\infty}_R s^{\alpha} \left[ \int_\Sigma   s^{2( \gamma+1)} |\partial_r f (\theta,s) |^2 s^{(d-1)}\;d\theta  \right]^{1/2}\;ds\\[2ex]
 & \leq \left[ \int^{\infty}_R s^{2\alpha} \right] ^{1/2} 
 \left[ \int_{\infty}^R   \int_\Sigma   s^{2( \gamma+1)} |\partial_r f (\theta,s) |^2 s^{(d-1)}\;d\theta   \;ds \right]^{1/2}\\[2ex] 
 & \leq C(\gamma,d) R^{\alpha +1/2} \|\nabla f\|_{L^2_{\gamma+1}(\R^d)}
 \end{align*}
 where the last line holds provided $2\alpha + 1<0$. We therefore have the following inequality 
\[ \|f(\cdot, R)\|_{L^2(\Sigma)} \leq C(\gamma, d)  R^{-\gamma-d/2}  \|\nabla f\|_{L^2_{\gamma+1}(\R^d)}, \qquad \gamma>-d/2.\]
  
Suppose now that  $f \in M^{m+1,2}_{\gamma}(\R^d)$. 
Using the chain rule we find that $|\partial_r D^k_\Sigma f(\cdot,R) | \leq |D^{k+1}f| R^k$, 
  where by $D^k_\Sigma$ we mean derivatives with respect  to unit sphere variables.
  As a result, similar calculations as the one above show that for  integer values of $k \in [1, m]$ we have that
\[ \|D_\Sigma^k f(\cdot, R)\|_{L^2(\Sigma)} \leq R^{-\gamma-d/2}  \|D^{k+1} f\|_{L^2_{\gamma+k+1}(\R^d)}, \qquad \gamma>-d/2.\]

Next we use a result from Adams and Fournier \cite[Theorem, 5.9]{adams2003sobolev},  which states that given a domain $\Sigma$ of dimension $n$, and conjugate exponents, $p,q$, satisfying $p>1$ and $mp>n$, $1\leq q \leq p$,  there exist a constant $C$ such that for all $u \in W^{m,p}(\Sigma)$
\[ \| u \|_{L^\infty(\Sigma)} \leq C \| u\|_{W^{m,p}(\Sigma)}^\delta \|u \|_{L^q(\Sigma)}^{(1-\delta)},\]
where $\delta = np/ ( np + (mp -n)q)$. Choosing $p=q=2$, $n = d-1$, $m>(d-1)/2$, we obtain $\delta = (d-1)/2m$ and
\[ \| f(\cdot, R) \|_{\infty} \leq C(m) R^{-\gamma-d/2} \|f\|^{\delta}_{M^{m+1,2}_\gamma(\R^d)} \|\nabla f\|^{1-\delta}_{L^2_{\gamma+1}(\R^d)},\]
provided $f \in M^{m+1,2}_{\gamma}(\R^d)$.

\end{proof}

\begin{Remark}
Although in the definitions presented above, the spaces $M^{s,p}_\gamma(\R^d)$ and $W^{s,p}_\gamma(\R^d)$ consist of complex-valued functions, in what follows we will assume that all functions are real-valued.
\end{Remark}

\subsection{Fredholm operators}
In this section and throughout the paper, we use the notation $M^{s,p}_{r,\gamma}(\R^d)$ and $ H^s_{r,\gamma}(\R^d)$ to denote the subspaces of radially symmetric functions in $M^{s,p}_{\gamma}(\R^d)$ and $H^s_\gamma(\R^d)$, respectively.

The next Lemma shows that for $\lambda > 0$, the operator $\partial_r + \frac{1}{r} +\lambda$ is invertible in appropriate spaces.

\begin{Lemma}\label{l:Llambda}
Let $\gamma \in \R$,  $\lambda > 0$, and $k \in \N$.
Then, the operator $(\partial_r + \frac{1}{r} + \lambda): H^k_{r,\gamma}(\R^2) \longrightarrow H^{k-1}_{r,\gamma}(\R^2)$   has a bounded inverse.
\end{Lemma}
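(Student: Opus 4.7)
The plan is to exhibit an explicit inverse by solving the first-order ODE directly. Writing
\[
L_\lambda = \partial_r + \tfrac{1}{r} + \lambda = \tfrac{1}{r}\,e^{-\lambda r}\,\partial_r\!\bigl(r\,e^{\lambda r}\,\cdot\,\bigr),
\]
the equation $L_\lambda u = f$ becomes $\partial_r(r e^{\lambda r} u) = r e^{\lambda r} f$. Imposing the boundary condition $ru|_{r=0}=0$, which is forced if $u$ is to lie in $L^2$ on $\R^2$ (radial $H^k$ regularity rules out a $1/r$ tail at the origin), gives the candidate solution operator
\[
T f(r) \;=\; \frac{e^{-\lambda r}}{r}\int_0^r s\,e^{\lambda s}\,f(s)\,ds \;=\; \int_0^r \frac{s}{r}\,e^{-\lambda(r-s)}\,f(s)\,ds.
\]
I will check that (i) $T$ is bounded $H^{k-1}_{r,\gamma}\to H^k_{r,\gamma}$, and (ii) $T L_\lambda = L_\lambda T = \textrm{Id}$. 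Together with injectivity, this yields invertibility.

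Injectivity is immediate. Any element of $\ker L_\lambda$ solves $(r e^{\lambda r} u)'=0$, hence $u(r)=C\,e^{-\lambda r}/r$. Since
\[
\|e^{-\lambda r}/r\|_{L^2(\R^2)}^2 \;=\; 2\pi\int_0^\infty e^{-2\lambda r}\,r^{-1}\,dr \;=\; +\infty,
\]
any such function lies in $H^k_{r,\gamma}$ only when $C=0$. For the weighted $L^2$ bound on $T$, I observe that the integral kernel $K(r,s)=(s/r)\,e^{-\lambda(r-s)}\mathbf{1}_{0\leq s\leq r}$ satisfies $0\le K(r,s)\le e^{-\lambda(r-s)}$. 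A Schur test in the natural measure $r\,dr$ then reduces the estimate to verifying
\[
\sup_r \int_0^\infty K(r,s)\,\frac{\langle r\rangle^{\gamma}}{\langle s\rangle^{\gamma}}\,s\,ds <\infty,\qquad \sup_s \int_0^\infty K(r,s)\,\frac{\langle s\rangle^{\gamma}}{\langle r\rangle^{\gamma}}\,r\,dr <\infty,
\]
both of which hold uniformly, because on $\{s\leq r\}$ the ratio $\langle r\rangle^{|\gamma|}/\langle s\rangle^{|\gamma|}$ grows at most polynomially in $r-s$ and is readily dominated by the exponential factor $e^{-\lambda(r-s)}$.

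Once control of $Tf$ in $L^2_{r,\gamma}$ is in hand, higher regularity is bootstrapped from the equation itself: $\partial_r u = f - u/r - \lambda u$ gives $\partial_r u$ in $L^2_{r,\gamma}$, once $u/r$ is controlled. The latter either follows from a Hardy-type inequality (using $u(0)=0$) or, more concretely, can be read off from $|Tf(r)|\le \int_0^r e^{-\lambda(r-s)}|f(s)|\,ds$, which already contains the factor $1/r$ absorbed into the definition. Differentiating the equation $k-1$ more times and repeatedly applying the commutator identity $[\partial_r,1/r]=-1/r^2$ produces $u\in H^k_{r,\gamma}$ with $\|u\|_{H^k_{r,\gamma}}\lesssim \|f\|_{H^{k-1}_{r,\gamma}}$. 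The identities $TL_\lambda = L_\lambda T = \textrm{Id}$ follow directly by differentiating the explicit formula.

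The technical heart of the argument — and the main obstacle — is the weighted Schur estimate. For $\gamma\le 0$ the ratio $\langle r\rangle^{\gamma}/\langle s\rangle^{\gamma}\le 1$ on $\{s\le r\}$, so the bound is essentially Young's inequality against $e^{-\lambda r}\in L^1$. For large positive $\gamma$, however, $s$ may be close to the origin while $r$ is large, and one must check that the exponential factor $e^{-\lambda(r-s)}$ truly dominates the polynomial mismatch; the constant in the resulting bound blows up as $\lambda\to 0^+$, which is precisely why the hypothesis $\lambda>0$ is essential and why the analogous statement fails for the Laplacian itself.
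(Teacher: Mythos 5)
Your overall strategy coincides with the paper's: the same explicit Volterra inverse $Tf(r)=\frac{1}{r}\int_0^r s\,\rme^{-\lambda(r-s)}f(s)\,ds$, triviality of the kernel because $\rme^{-\lambda r}/r$ fails to be square integrable at the origin, a weighted bound on $T$ in which the exponential kernel absorbs the polynomial weight mismatch, and recovery of $\partial_r u$ from the equation followed by induction in $k$. The one place where your write-up does not close is, by your own description, the technical heart: the two Schur conditions as displayed are false, already for $\gamma=0$, because the radial measure factors $s\,ds$ and $r\,dr$ are counted on top of a kernel that already carries the factor $s/r$ (and in the second condition the weight ratio is inverted). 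Indeed,
\[
\int_0^r \frac{s}{r}\,\rme^{-\lambda(r-s)}\,s\,ds\sim \frac{r}{\lambda},\qquad
\int_s^\infty \frac{s}{r}\,\rme^{-\lambda(r-s)}\,r\,dr=\frac{s}{\lambda},
\]
so neither supremum is finite. The fix is not to discard the factor $s/r$ by the crude bound $K(r,s)\le \rme^{-\lambda(r-s)}$, but to keep half of it: conjugating by the unitary map $f\mapsto \langle r\rangle^{\gamma}r^{1/2}f$ from $L^2(\langle r\rangle^{2\gamma}r\,dr)$ to $L^2(dr)$, the kernel becomes $(s/r)^{1/2}\,\rme^{-\lambda(r-s)}\,\langle r\rangle^{\gamma}\langle s\rangle^{-\gamma}\mathbf{1}_{\{s\le r\}}\le C\,\rme^{-\lambda(r-s)}\langle r-s\rangle^{|\gamma|}\mathbf{1}_{\{s\le r\}}$, a convolution kernel in $L^1$, and Young's inequality (equivalently, Schur with test function $1$) gives the bound, with constant blowing up as $\lambda\to0^+$ as you note. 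This corrected computation is precisely the paper's argument: there the extra $1/2$ appears as the exponent in $\langle s\rangle^{\gamma+1/2}$, the weight mismatch is absorbed by $\langle r-s\rangle^{|\gamma+1/2|}$, and the conclusion follows from Minkowski's inequality.

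A secondary point: your claim that the control of $u/r$ near the origin ``can be read off'' from $|Tf(r)|\le\int_0^r \rme^{-\lambda(r-s)}|f(s)|\,ds$ does not work (Cauchy--Schwarz from that bound loses a logarithm), and the two-dimensional Hardy inequality $\|u/r\|_{L^2}\lesssim\|\partial_r u\|_{L^2}$ fails even with $u(0)=0$, so it cannot be invoked directly. What does work is applying Hardy to the explicit formula: in the variable $t=r^2/2$ the operator $f\mapsto \frac{1}{r^2}\int_0^r s f(s)\,ds$ is the classical Hardy average $\frac{1}{2t}\int_0^t$, bounded on $L^2(dt)=L^2(r\,dr)$; alternatively one can argue as the paper does near the origin. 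With these two repairs, the remaining steps (injectivity, the identities $TL_\lambda=L_\lambda T=\mathrm{Id}$, and the inductive passage to $H^k_{r,\gamma}$) match the paper's proof.
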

\begin{proof}
A short calculation shows that the kernel of this operator is spanned by the function  $\rme^{-\lambda r}/r$, which is singular near the origin and is therefore not in $L^2_{r, \gamma}(\R^2)$.  At the same time, the adjoint of this operator is given by $-\partial_r + \lambda : L^2_{r,-\gamma}(\R^2) \longrightarrow H^{-1}_{r,-\gamma}(\R^2)$, and we find that the cokernel is spanned by the function $\rme^{\lambda r}$, which again is not in the space $L^2_{r,-\gamma}(\R^2) $ no matter what the value of $\gamma $ is. As a result, the kernel and co-kernel of this operator are trivial.

To prove the result, we are left with showing that 
that the inverse operator 
\[\begin{array}{c c c}
L^2_{r, \gamma}(\R^2) & \longrightarrow & H^1_{r, \gamma}(\R^2)\\
f(r) & \mapsto & u(r) =\frac{1}{r} \int_0^r  \rme^{\lambda( s-r)} f(s) s \;ds
\end{array}
\]
is bounded. To show that $\|u \|_{L^2_{r,\gamma}(\R^2)} \leq \| f\|_{L^2_{r,\gamma}(\R^2)}$, we use the inequality
\[ \|u \|_{L^2_{r,\gamma}(\R^2)} \leq \|u \|_{L^2_{r,\gamma}(B_1) } + \|u \|_{L^2_{r,\gamma}(\R^2\backslash B_1)}, \]
where $B_1$ is the unit ball in $\R^2$. Then
\begin{align*}  
\|u \|_{L^2_{r,\gamma}(\R^2\backslash B_1)}  
= & \left [  \int_1^\infty \left| \int_0^r f(s) \rme^{\lambda(s-r)} \frac{s}{r} \;ds   \right|^2  \langle r \rangle^{2 \gamma } r\;dr \right]^{1/2}  \\
\leq &  2^{|\gamma +1/2|}  \left [  \int_1^\infty \left| \int_0^r f(s)  \langle s \rangle^{\gamma  + 1/2}\quad   \rme^{-\lambda(r-s)}  \langle r- s\rangle^{| \gamma + \frac{1}{2} |}  \;ds \right|^2  \;dr \right]^{1/2},\\
\leq &   2^{|\gamma +1/2|}   \left [  \int_0^\infty \left|  \int_0^r |f(r-z)| \langle r-z \rangle^{\gamma+1/2}  \rme^{-\lambda z}  \langle z \rangle^{|\gamma+1/2|}
\;dz \right|^2\;dr \right]^{1/2}\\
\leq &  2^{|\gamma +1/2| \int_0^\infty e^{-\lambda z} \langle z \rangle^{|\gamma+1/2|}
\left(   \int_z^\infty |f(r-z)|^2 \langle r-z \rangle^{2\gamma} z \;dz \right)^{1/2}  \;dz  } \\
\leq & C_1(\gamma,\lambda)\| f\|_{L^2_{r,\gamma}(\R^2)} 
\end{align*}
where the second line follows from 
the fact that $s/r<1$ and the relation  $ \langle s \rangle^{-\sigma} \langle r \rangle^\sigma \leq 2^{|\sigma|} \langle r-s \rangle^{|\sigma|}$.
 The  inequality on the third line comes from  using the change of coordinates $z = r-s$ and extending the outer limits of integration to zero,
while the fourth line follows 
from an application of  Minkowski's inequality for integrals, \cite[Theorem 6.19]{folland1999}.
In the final result  we let $C_1(\gamma, \lambda) = 2^{|\gamma +1/2|} \int_0^\infty e^{-\lambda z} \langle z \rangle^{|\gamma+1/2|}\;dz$.

To prove the relation $\|u \|_{L^2_{r,\gamma}(B_1) }  \leq C \| f\|_{L^2_{r,\gamma}(\R^2)}$,
we bound

\begin{align*}
 |u(r)| & \leq \frac{1}{r} \int_0^r \left | f(s) \rme^{-\lambda(r- s)} s \right| \;ds \\
  & \leq \frac{1}{r} \int_0^r \left | f(s)  \right| s \;ds \\
 & \leq \frac{1 }{r} \left( \int_0^r  | f(s)|^2  s \;ds\right)^{1/2} \left( \int_0^r s \;ds \right)^{1/2}\\
 & \leq \frac{1 }{r}  \left( \int_0^\infty  | f(s)|^2 \langle s \rangle^{2 \gamma}  s \;ds\right)^{1/2}   \left( \frac{r}{\sqrt{2}}  \right)\\
 & \leq \frac{1}{\sqrt{2}} \| f\|_{L^2_{r,\gamma}(\R^2)}
\end{align*}
where the third line follows from H\"older's inequality. We then obtain that
$ \| u(r)\|_{L^2_{r,\gamma}(B_1) }\leq  C_2(\gamma) \;\| f\|_{L^2_{r,\gamma}(\R^2)}$
with $C_2(\gamma) = \frac{1}{\sqrt{2}} \left( \int_0^1 \langle r\rangle^{2\gamma} r \;dr \right)^{1/2}$,
and consequently
\[ \| u(r)\|_{L^2_{r,\gamma}(\R^2) }\leq (C_1(\gamma,\lambda) + C_2(\gamma) ) \| f\|_{L^2_{r,\gamma}(\R^2)}.\]

Next, to show that the derivative $\partial_r u \in L^2_{r,\gamma}(\R^2)$, we use the equation to 
write $\partial_r u = f - \lambda u - \frac{u}{r}$, and thus obtain
\[ \| \partial_r u \|_{L^2_{r,\gamma}(\R^2 \setminus B_1)} \leq (1 +(1+\lambda) C_1(\gamma, \lambda))\; \| f\|_{L^2_{r,\gamma}(\R^2)}.\]
To bound $\| \partial_r u \|_{L^2_{r,\gamma}(B_1)}$, notice first that
\[  \frac{|u(r)|}{r}  \leq \frac{1}{r^2} \int_0^r |f(s)| s\;ds \leq  \frac{1}{r^2} \int_0^r |f(s)- f(y)|s \;ds + \frac{1}{2} |f(y)|,\]
where we pick $y \in [0,r]$. 
Letting $\tilde{B} (y,2r)  = B(y,2r) \cap B(0,r) \subset \R^2$ , where $B(y,2r)$ is the ball centered at $y$ of radius $2r$,
the inequality becomes
\[  \frac{|u(r)|}{r} \leq \frac{|\tilde{B}(y,2r)|}{2 \pi r^2} \left( \frac{1}{|\tilde{B}(y,2r)|} \int_{\tilde{B}(y,2r)} |f(s) -f(y)| s \;ds\;d \theta \right) + \frac{1}{2} |f(y)|.\]
Here, $|\tilde{B}(y,2r)|$ denotes the measure of the set $\tilde{B}(y,2r)$.
Since $L^2(B(0,M)) \subset L^1(B(0,M))$ for any ball $B(0,M) \subset \R^2$, with finite radius $M$, we have that $f$ is in $L^1_{loc}(B(0,M))$.
By the Lebesgue Differentiation Theorem \cite[Theorem 3.21]{folland1999}, 
the expression in parenthesis approaches zero as $r\to 0$, while the fraction in front remains bounded
since $|\tilde{B}(y,2r)|>2\pi r^2$. 
Therefore, close to the origin, the function $|u(r)|/r$ is bounded by $f(r)$ and, using again the equation $\partial_r u = f - \lambda u - u/r$, we find that
 \[ \| \partial_r u \|_{L^2_{r,\gamma}(B_1)} \leq (2 +(1+\lambda) C_2(\gamma))\; \| f\|_{L^2_{r,\gamma}(\R^2)}.\]
It then follows that
\[ \| \partial_r u\|_{L^2_{r,\gamma}(\R^2)} \leq  (3 + (1+\lambda)(C_1(\gamma, \lambda) + C_2(\gamma))) \| f\|_{L^2_{r,\gamma}(\R^2)}.\]

The above calculations then show that the map $\mathcal{L}_\lambda= \partial_r + \frac{1}{r} +\lambda : H^1_{r,\gamma}(\R^2) \longrightarrow L^2_{r,\gamma}(\R^2)$ is invertible. Moreover, we also obtain that the operator norm of its inverse satisfies,
\[ \| \mathcal{L}^{-1}_\lambda\| \leq 3 + (2+\lambda)( C_1(\gamma,\lambda) + C_2(\gamma) ).\]

To extend the result to the more general operator $\partial_r + \frac{1}{r} +\lambda : H^k_{r,\gamma}(\R^2) \longrightarrow H^{k-1}_{r,\gamma}(\R^2)$ one can proceed by induction:
 Assuming that $f$ and $u$ are in $ H^{k-1}_{r,\gamma}(\R^2)$  one shows that $\partial_r^k u $ is in $L^2_{r,\gamma}(\R^2)$ using the relation $\partial^k_r u = \partial_r^{k-1} (f - u - \frac{u}{r})$. The fact that $\partial_r^{k-1} \left(\dfrac{u}{r}\right)$ is in the correct space follows by a similar argument as the one done above to prove $u/r \in L^2_{r,\gamma}(\R^2)$.

\end{proof}

To simplify notation, we define $ \mathcal{L}(\lambda) = \partial_r + \frac{1}{r} + \lambda$ and prove in the next Lemma that its inverse, defined over appropriate weighted spaces, is continuously differentiable with respect to the parameter $\lambda$.

\begin{Lemma}\label{l:contlambda}
Let $\lambda> 0$ and $k \in \N \cup\{0\}$ and consider the operator defined by $\mathcal{L}(\lambda) u = \partial_r u + \frac{1}{r} u + \lambda u$. Then,  its inverse, 
\[
\begin{array}{c c c c}
\mathcal{L}^{-1}(\lambda) : & H^k_{r, \gamma}(\R^2) & \longrightarrow & H^{k+1}_{r, \gamma}(\R^2)\\
\end{array}
\]
is $C^1$ with respect to the parameter $\lambda$. 
\end{Lemma}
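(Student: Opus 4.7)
The plan is to extract a resolvent-type identity for the family $\mathcal{L}(\lambda)$ and then verify that the candidate derivative depends continuously on $\lambda$ in the operator topology. The starting observation is that $\mathcal{L}(\lambda)-\mathcal{L}(\mu) = (\lambda-\mu)\,\rmId$ as bounded maps $H^{k+1}_{r,\gamma}(\R^2)\to H^{k}_{r,\gamma}(\R^2)$. Composing this identity with $\mathcal{L}^{-1}(\mu)$ on the left and $\mathcal{L}^{-1}(\lambda)$ on the right, both of which are available from Lemma \ref{l:Llambda}, would give
\[
\mathcal{L}^{-1}(\mu)-\mathcal{L}^{-1}(\lambda) \;=\; -(\mu-\lambda)\,\mathcal{L}^{-1}(\mu)\,\mathcal{L}^{-1}(\lambda),
\]
where the product on the right is read as $H^{k}_{r,\gamma}\xrightarrow{\mathcal{L}^{-1}(\lambda)} H^{k+1}_{r,\gamma}\xrightarrow{\mathcal{L}^{-1}(\mu)} H^{k+2}_{r,\gamma}\hookrightarrow H^{k+1}_{r,\gamma}$. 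This is exactly the operator topology in which the $C^1$ claim is formulated, which is what makes this identity useful.

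Dividing by $\mu-\lambda$ and letting $\mu\to\lambda$ heuristically identifies the derivative as the bounded operator $-\mathcal{L}^{-1}(\lambda)^{2}$ from $H^{k}_{r,\gamma}$ into $H^{k+1}_{r,\gamma}$. To make the limit rigorous I will need two ingredients: (i) a locally uniform bound $\|\mathcal{L}^{-1}(\lambda)\|_{H^{k}_{r,\gamma}\to H^{k+1}_{r,\gamma}}\le C$ for $\lambda$ in a compact subinterval of $(0,\infty)$, and (ii) operator-norm continuity of $\lambda\mapsto \mathcal{L}^{-1}(\lambda)$. For (i) I plan to revisit the proof of Lemma \ref{l:Llambda} using the rescaling $r\mapsto\lambda r$, which reduces the problem to the already-treated case $\lambda=1$; under this change of variables the weight $\langle r\rangle$ is transformed by a factor that is continuous and bounded above and below on compact subsets of $(0,\infty)$, so the norms transfer up to a constant that depends continuously on $\lambda$. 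Given (i), item (ii) is automatic from the resolvent identity, since
\[
\|\mathcal{L}^{-1}(\mu)-\mathcal{L}^{-1}(\lambda)\|_{H^{k}_{r,\gamma}\to H^{k+1}_{r,\gamma}} \;\le\; |\mu-\lambda|\,\|\mathcal{L}^{-1}(\mu)\|_{H^{k+1}_{r,\gamma}\to H^{k+2}_{r,\gamma}}\,\|\mathcal{L}^{-1}(\lambda)\|_{H^{k}_{r,\gamma}\to H^{k+1}_{r,\gamma}},
\]
and the first factor on the right is controlled by the same scaling argument, one index higher.

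Putting (i) and (ii) together, the resolvent identity shows that the difference quotient converges in operator norm to $-\mathcal{L}^{-1}(\lambda)^{2}$ as $\mu\to\lambda$, and a further application of (ii) shows that the map $\lambda\mapsto -\mathcal{L}^{-1}(\lambda)^{2}$ is itself continuous in the operator topology, yielding the claimed $C^{1}$ regularity. I expect the main technical obstacle to be step (i): carrying out the rescaling argument cleanly, so that the constants in the weighted norms are controlled uniformly in $\lambda$ on compact subsets of $(0,\infty)$ and, when $k\ge 1$, for the derivatives as well. All remaining steps are mechanical consequences of the resolvent identity once this uniform bound is in place.
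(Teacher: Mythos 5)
Your proposal is correct and follows essentially the same route as the paper: both hinge on the resolvent identity $\mathcal{L}^{-1}(\lambda+h)-\mathcal{L}^{-1}(\lambda)=-h\,\mathcal{L}^{-1}(\lambda)\mathcal{L}^{-1}(\lambda+h)$, with the derivative identified as $-\mathcal{L}^{-1}(\lambda)^{2}$ viewed as a map $H^{k}_{r,\gamma}(\R^2)\to H^{k+1}_{r,\gamma}(\R^2)$ and continuity of the derivative obtained by composing continuous operators. The only difference is that you spell out the locally uniform bound on $\|\mathcal{L}^{-1}(\lambda)\|$ via a rescaling argument, which the paper leaves implicit after Lemma \ref{l:Llambda}; that bound also follows directly from the same identity by a Neumann-series perturbation of $\mathcal{L}(\lambda)$, so the rescaling step, while fine, is not strictly needed.
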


\begin{proof}
Lemma \ref{l:Llambda}  shows that $\mathcal{L}^{-1}(\lambda)$, with the specified domain and range, is a bounded operator for all $\lambda \in (0,\infty)$. To prove the continuity of this operator with respect to $\lambda$ we must show that given $f \in H^k_{r,\gamma}(\R^2)$,
\[ \sup_{\|f\|_{H^k_{r,\gamma}} = 1} \| (\mathcal{L}^{-1}(\lambda + h) - \mathcal{L}^{-1}(\lambda) ) f \|_{H^{k+1}_{r,\gamma}} \leq C h. \]

Using the notation $\phi(\lambda) = \mathcal{L}^{-1}(\lambda) f $, we notice that
\begin{align*}
 (\mathcal{L}^{-1}(\lambda + h) - \mathcal{L}^{-1}(\lambda) ) f  
 = & \phi(\lambda + h) -\phi(\lambda)\\
=& - \mathcal{L}^{-1}(\lambda) \left[  (\mathcal{L}(\lambda + h) - \mathcal{L}(\lambda) ) \right] \phi(\lambda +h)\\
= & - h \mathcal{L}^{-1}(\lambda)  \mathcal{L}^{-1}(\lambda+h )f,
\end{align*}
from which the desired result follows.
This last expression also shows that for $\lambda > 0$, the derivative of $\mathcal{L}^{-1}(\lambda)$ with respect to this parameter is given by
\[
\begin{array}{c c c c}
\partial_\lambda \mathcal{L}^{-1}(\lambda): & H^k_{r, \gamma}(\R^2) & \longrightarrow &H^{k+1}_{r,\gamma}(\R^2) \\
& f& \mapsto  & - \mathcal{L}^{-1}(\lambda)\mathcal{L}^{-1}(\lambda) f
\end{array}
\]
Since the derivative $\partial_\lambda \mathcal{L}^{-1}(\lambda)$ is the composition of two continuous operators, it follows that it is itself continuous with respect to $\lambda$.
\end{proof}

Finally, the next proposition establishes Fredholm properties for the radial operators $\Delta_n: M^{2,2}_{r, \gamma-2}(\R^2) \rightarrow L^2_{r,\gamma}(\R^2)$ ,
\[ \Delta_n = \partial_{rr}  + \frac{1}{r} \partial_r  - \frac{n^2}{r^2}, \quad n \in \N \cup \{ 0\}.\]
The results follows from 
  \cite{mcowen1979}, where it is shown that the Laplace operator
$\Delta : M^{2,p}_{\gamma-2}(\R^2) \rightarrow L^p_\gamma(\R^2)$ is Fredholm,
and the fact that when $p=2$, one can decompose the space $M^{2,2}_{\gamma-2}(\R^2)$ into
 a direct sum $\oplus m^2_{n,\gamma-2}$ where
  \[ m^2_{n,\gamma-2} =\{ u \in M^{2,2}_{\gamma-2}(\R^2): \; u(r, \theta) = v_n(r) \rme^{\rmi n \theta}, 
 \quad v_n \in M^{2,2}_{r,\gamma-2}(\R^2)\}, \qquad n \in \Z. \]
 Notice that because the functions in $M^{2,2}_{\gamma-2}(\R^2)$ are real valued we have that $ v_{-n} = \bar{v}_n$. 
For a detailed proof of this next result, see \cite[Lemma 3.1]{jaramillo2018}.

\begin{Proposition}\label{p:Delta_nFredholm}
Let $\gamma \in \R \backslash \Z$, and $n \in \Z$. Then, the operator $\Delta_n: M^{2,2}_{r, \gamma-2}(\R^2) \rightarrow L^2_{r,\gamma}(\R^2)$ given by 
\[ \Delta_n \phi = \partial_{rr} \phi + \frac{1}{r} \partial_r \phi - \frac{n^2}{r^2} \phi\]
is a Fredholm operator and,
\begin{enumerate}
\item for $1-|n| < \gamma< |n|+1$, the map is invertible;
\item for $\gamma>|n|+1 $, the map is injective with cokernel spanned by $r^{|n|}$;
\item for $\gamma< 1-|n|$, the map is surjective with kernel spanned by $r^{|n|}$.
\end{enumerate}
On the other hand, the operator is not Fredholm for integer values of $\gamma$.
\end{Proposition}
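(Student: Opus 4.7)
The plan is to reduce the statement to McOwen's Fredholm theorem for the full Laplacian $\Delta: M^{2,2}_{\gamma-2}(\R^2)\to L^2_\gamma(\R^2)$ via an angular Fourier decomposition. First I would use the orthogonal direct sum $M^{2,2}_{\gamma-2}(\R^2)=\bigoplus_{n\in\Z} m^2_{n,\gamma-2}$ already recorded in the excerpt, under which each summand is identified with the radial space $M^{2,2}_{r,\gamma-2}(\R^2)$ through $\tilde u(r)\mapsto \tilde u(r)\rme^{\rmi n\theta}$, together with the analogous mode decomposition of $L^2_\gamma(\R^2)$. Writing the Laplacian in polar coordinates as $\Delta=\partial_{rr}+\frac{1}{r}\partial_r+\frac{1}{r^2}\partial_{\theta\theta}$, every angular block is invariant and the restriction to the $n$-th block is precisely $\Delta_n$. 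Thus $\Delta=\bigoplus_n \Delta_n$ as operators, and the Fredholmness of $\Delta$ for $\gamma\notin\Z$ given by \cite{mcowen1979} descends block-by-block to each $\Delta_n$, with only finitely many modes contributing non-trivially to the total kernel or cokernel.

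Next I would compute the candidate kernel explicitly by solving the ODE $\Delta_n u=0$, whose general solution is $ar^{|n|}+br^{-|n|}$ for $n\neq 0$ and $a+b\log r$ for $n=0$. The singular branch is locally excluded by the $M^{2,2}_{r,\gamma-2}$-regularity requirement near the origin, leaving $r^{|n|}$ (respectively the constant) as the only candidate. A direct computation of the Kondratiev norm, dominated at infinity by $\int|u(r)|^2\langle r\rangle^{2(\gamma-2)}\,r\,dr$ after factoring the radial Jacobian, shows that $r^{|n|}\in M^{2,2}_{r,\gamma-2}(\R^2)$ iff $\gamma<1-|n|$, which produces case (3). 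In that range, the same weight count excludes the only cokernel candidate $r^{|n|}$ from $L^2_{r,-\gamma}(\R^2)$ at infinity, so the cokernel is trivial and surjectivity follows.

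For the cokernel in general I would use the formal symmetry of $\Delta_n$ together with the duality $(L^2_{r,\gamma}(\R^2))^*\cong L^2_{r,-\gamma}(\R^2)$: a functional annihilating the range of $\Delta_n$ is represented by a distribution $v$ with $\Delta_n v=0$ weakly, hence again proportional to $r^{|n|}$. Requiring $r^{|n|}\in L^2_{r,-\gamma}(\R^2)$ at infinity gives the threshold $\gamma>|n|+1$, producing case (2), while on the intermediate strip $1-|n|<\gamma<|n|+1$ both admissibility conditions fail simultaneously, so kernel and cokernel are empty and $\Delta_n$ is invertible, yielding case (1). The non-Fredholm behavior at integer $\gamma$ is inherited directly from McOwen: at those thresholds $r^{|n|}$ or an associated logarithmic correction transitions between the relevant weighted spaces, the range of $\Delta$ fails to close on the corresponding block, and the failure is seen in $\Delta_n$.

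I expect the main obstacle to be the weight bookkeeping: the Sobolev-to-Kondratiev shift $\gamma\mapsto\gamma-2$ between range and domain, the extra weight factors on derivatives in the Kondratiev norm, and the radial Jacobian $r\,dr$ all combine non-trivially, and the three admissibility thresholds extracted block-by-block must be verified to match McOwen's index formula for the full $\Delta$. A related subtlety is the cokernel identification, since $r^{|n|}$ has to be interpreted as a continuous functional on $L^2_{r,\gamma}(\R^2)$ rather than as an element of $M^{2,2}_{r,\gamma-2}(\R^2)$; handling this cleanly requires passing through the dual Kondratiev spaces introduced in Section~\ref{s:preliminaries} rather than the primal space itself.
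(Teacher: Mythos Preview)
Your approach is essentially the same as the paper's: the paper does not give a self-contained proof but simply states, in the paragraph preceding the proposition, that the result follows from McOwen's Fredholm theorem for $\Delta: M^{2,p}_{\gamma-2}(\R^2)\to L^p_\gamma(\R^2)$ together with the angular Fourier decomposition $M^{2,2}_{\gamma-2}(\R^2)=\bigoplus_n m^2_{n,\gamma-2}$, and refers to \cite[Lemma~3.1]{jaramillo2018} for details. Your proposal carries out exactly this reduction and in fact supplies more detail than the paper---the explicit ODE solution, the threshold computations for $r^{|n|}$ in the domain and dual spaces, and the duality argument for the cokernel---none of which the paper spells out.
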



\section{Intermediate Approximations to the Viscous Eikonal Equation}\label{s:inter}

As mentioned in the introduction, our interest in the viscous eikonal equation
 \[\tilde{\phi}_t = \Delta \tilde{ \phi} - b |\nabla \tilde{\phi } |^2 - \eps g(x), \quad x \in \R^2,\]
 stems from  its role as a model equation for the phase dynamics of 
target patterns and spiral waves in oscillatory systems. 
We are therefore  interested in solutions of the form $\tilde{\phi}(x,t) = \phi (x) - \Omega t$,
which then satisfy the steady state equation,
\begin{equation}\label{e:eikonal}
\Delta \phi - b|\nabla \phi |^2 - \eps g(x) + \Omega =0 \qquad x\in \R^2.
\end{equation} 
Because the gradient, $\nabla \phi$, then approximates the pattern's wavenumber, 
target patterns then correspond to those $\phi$ which in addition fulfill the
boundary conditions,
$\nabla \phi \to k$ as $|x| \to \infty$.
Consequently, we look for solutions to equation \eqref{e:eikonal}
that bifurcate from zero when $\eps>0$,  and whose gradients are bounded at infinity.

Notice that the condition on the gradient, $\nabla \phi$, provides enough information to derive an equation that is valid at intermediate scales. Indeed, assuming a regular perturbation for both $\phi$ and $\Omega,$ one obtains at order $\rmO(\eps)$ the equation,
\[ \partial_{rr} \phi_1 + \frac{1}{r} \partial_r \phi_1 -  g =  - \Omega_1.\]
A short calculation then shows that  in order to obtain solutions with bounded derivatives, the parameter $\Omega_1$ must be zero. Continuing this  perturbation analysis one checks that this condition must be satisfied at all orders of $\eps$ . In other words, the frequency, $\Omega$, must be small beyond all orders of this parameter.
Consequently, at intermediate scales the system is well approximated by the intermediate equation
\begin{equation}\label{e:intermediate}
\Delta_0 \phi - b(\partial_r \phi)^2 - \eps g_c - \eps g_f = 0.
\end{equation}

Notice that we have explicitly written the inhomogeneity as the sum of two functions satisfying Hypothesis \ref{h:g}. This choice of notation will be used next in Subsection \ref{ss:firstapprox}, where
we construct a first order approximation for the above equation. We then  use this information
to prove existence of solutions to equation ~\eqref{e:intermediate} in Subsection \ref{e:existenceInter}.

{\bf Notation:} Throughout this section, and in the rest of the paper, we use $\gamma_\epsilon$ to denote
the Euler Mascheroni constant, and  the symbols $\chi, \chi_M\in C^\infty(\R^2)$ to denote smooth radial cut-off functions satisfying
\[
\chi(x) = \left \{ 
\begin{array}{c c c}
0 & \mbox{if} & |x|<1\\
1 & \mbox{if} & |x|>2
\end{array},
\right.
\qquad
 \chi_M(x) = \left \{ 
\begin{array}{c c c}
0 & \mbox{if} & |x|<1\\
1 & \mbox{if} & 2 < |x|< M\\
0 & \mbox{if} & 2M <|x|\\
\end{array},
\right.
\]
 where $M$ is a positive constant.  Notice in particular, that $\chi_M$ has compact support.

\subsection{First Order Approximation}\label{ss:firstapprox}

We construct a first order approximation, $\phi$, which is the sum of two functions, $\phi_0$ and  $\phi_1$. 
We take 
\begin{equation}\label{e:approx0}
 \phi_0 = - \frac{1}{b} \chi_M \log( 1+ a \log r + \eps K(r)),
 \end{equation}
a choice that is motivated by the Hopf-Cole transform, $\phi = -\frac{1}{b} \log \Psi$, which turns the eikonal equation \eqref{e:intermediate} into the steady state Schr\"odinger equation with potential $\eps g$. 
The value of the constant $M$ appearing in the definition of the cut-off function $\chi_M$ is taken so that the
expression $\log( 1+ a \log r + \eps K(r)),$ always remains bounded.
The constant $a$ is a parameter that is determined when constructing the second part to the approximation,
 while the function $K(r)$ satisfies 
\begin{equation}\label{e:K}
  \Delta_0 K + b g_f =0.
  \end{equation}

The fact that we can solve this last equation follows from our assumptions on the inhomogeneity. Recall that $g_f \in H^k_\sigma(\R^2)$ for $0<\sigma<1$. Proposition \ref{p:Delta_nFredholm} then shows that the radial Laplacian, $\Delta_0$, is a surjective operator with a one dimensional kernel spanned by $\{1\}$.
We can therefore use Lyapunov-Schmidt reduction to solve this equation and find a family of solutions 
\[ K(r) = K_p(r) + c, \qquad c \in \R.\]
Since $c$ is arbitrary, without loss of generality we pick $c=0$.
We also find  that the solution, $K$, belongs to the space $ M^{2,2}_{r, \sigma-2}(\R^2)$. In fact, one can check that $K$ has more regularity and is in the space
\begin{equation}\label{e:spaceR}
 R_\sigma^k = \{ u \in M^{2,2}_{r, \sigma-2}(\R^2) : D^2 u \in H^k_\sigma(\R^2)\}.
 \end{equation}
 
\begin{Remark}\label{r:phi0}
Notice that the function $\phi_0$ is as regular as the function $K$, and that as a result the derivative
$\partial_r \phi_0$ is in the space $H^{k+1}_\sigma(\R^2)$.
In addition, this function is bounded and has compact support.
\end{Remark}
\begin{Remark}\label{r:K0}
Because {\color{blue} $g_f$} is in $H^k_\sigma(\R^2)$ with $\sigma \in (0,1)$ and $k\geq2$,
we then have the following decay properties for the solution  to equation 
\eqref{e:K}:
\begin{itemize}
\item if {\color{blue} $g_f $} decays like $1/r^m$ in the far field, with $1<m<2,$ then $K \sim \rmO(r^{2-m})$ at infinity, while
\item if {\color{blue} $g_f$} decays like $1/r^2$ in the far field, then $K \sim \rmO( (\log r)^2)$ at infinity.
\end{itemize}
\end{Remark}

Next, we define the second function, $\phi_1$,  
as the solution to the equation
\[ \Delta_0 \phi_1 - \frac{a}{b} \Delta_0 ( \chi \log r) - \eps g_c =0.\]
Here, the constant $a$  
is the same as the one 
 appearing in the definition of $\phi_0$,
 and the function $g_c $ is in the space $ H^k_\gamma(\R^2)$ with $1<\gamma$, by assumption.

To justify the existence of  $\phi_1$, we use again
Proposition \ref{p:Delta_nFredholm} which shows that for values of $\gamma>1$,
 the operator $\Delta_0:M^{2,2}_{r, \gamma-2}(\R^2) \longrightarrow L^2_{r,\gamma}(\R^2)$ is Fredholm with index -1, and cokernel spanned by $\{1 \}$.
Because the projection of  $\Delta_0 ( \chi \log r)$ onto the cokernel is non-trivial, i.e.
\[ \int_{\R^2} \Delta_0 ( \chi \log r) \;dx = 2\pi,\]
the Bordering Lemma stated at the end of this subsection 
then shows that the operator 
\[\begin{array}{c c c}
M^{2,2}_{r,\gamma-2}(\R^2) \times \R & \longrightarrow & L^2_{r, \gamma}(\R^2)\\[2ex]
(\phi, a) & \longmapsto & \Delta_0 \phi_1 - \dfrac{a}{b} \Delta_0 ( \chi \log r) 
\end{array}\]
is invertible. 
Therefore, the equation for $\phi_1$ is indeed solvable. 
In addition, projecting onto the constant functions we also find that
\begin{equation}\label{e:constantA} a=  -\eps b \int_0^\infty g_c(r) \;r \;dr. \end{equation}
Finally, since $g_c \in H^k_{\gamma}(\R^2)$, it follows that our solution $\phi_1$ is in the space $R_\gamma^k$, defined as in \eqref{e:spaceR}.

\begin{Lemma}\label{e:bordering}[Bordering Lemma]
Let $X$ and $Y$ be Banach spaces, and consider the operator
\[ S = \begin{bmatrix}
A & B\\
C& D
\end{bmatrix} : X \times \R^p \longrightarrow Y \times \R^q,\]
with bounded linear operators $A: X \longrightarrow Y$, $B:\R^p \longrightarrow Y$, $C: X  \longrightarrow \R^q$,
$D: \R^p \longrightarrow \R^q$. If $A$ is Fredholm of index $i$, then $S$ is Fredholm of index $i +p-q$.
\end{Lemma}

\begin{proof}
One can write $S$ as the sum of a block diagonal operator with the indicated index, $i + p-q$,
and a compact operator  consisting of the off-diagonal elements.
Since compact perturbations do not alter the index of a Fredholm operator, the result then follows.
\end{proof}

\subsection{Existence of Solutions to Intermediate Approximation}\label{e:existenceInter}
Using the first order approximation, $\phi_0+\phi_1$, defined in the previous subsection we now prove the existence of solutions  to equation \eqref{e:intermediate} 
using the implicit function theorem. 

Inserting the ansatz
\[ \phi = \phi_0 + \phi_1 + \phi_2\]
into equation \eqref{e:intermediate},
 one obtains  the following  expression for $\phi_2$,
\begin{equation}\label{e:inter2}
 \Delta_0 \phi_2 -b \Big ( 2 \partial_r \phi_0( \partial_r \phi_1 + \partial_r \phi_2) + ( \partial_r \phi_1 + \partial_r \phi_2)^2 \Big) + G_1 =0,
\end{equation}
where the term $G_1 $ is given by
\[G_1  = \Delta_0 \phi_0 - b( \partial_r \phi_0)^2 - \eps g_f +\frac{a}{b} \Delta_0 ( \chi \log r) \]

To continue the analysis, we let $\psi = \partial_r \phi_2$ in equation \eqref{e:inter2}, 
and add and subtract the term $\lambda \psi$.
We assume that the parameter $\lambda$ is sufficiently small, positive, and fixed. The result is,
\[\partial_r \psi + \frac{1}{r} \psi + \lambda \psi -b\Big ( 2 \partial_r \phi_0( \partial_r \phi_1 + \psi) + ( \partial_r \phi_1 + \psi)^2 \Big) + G_1 - \lambda \psi =0.\]
Letting $\mathcal{L}_\lambda = \partial_r + \frac{1}{r} + \lambda $, we may precondition 
 this last equation with $\mathcal{L}^{-1}_\lambda$ and write
\begin{equation}\label{e:interIFT}
\psi + \mathcal{L}_\lambda^{-1}\left[ -b \Big ( 2 \partial_r \phi_0( \partial_r \phi_1 + \psi) + ( \partial_r \phi_1 + \psi)^2 \Big) + G_1 - \lambda \psi  \right] =0.
\end{equation}
Because the above expression is equivalent to the intermediate equation \eqref{e:intermediate}, if we find a solution $\psi$ to \eqref{e:interIFT}, we immediately obtain a corresponding solution to \eqref{e:intermediate} of the form
\[ \phi(r;\eps) = \phi_0(r; \eps) + \phi_1(r;\eps) + \phi_2(r;\eps) +\eps c, \]
where $\partial_r \phi_2 = \psi$ and $c$ is a constant of integration.

 To use the implicit function theorem, we  view the left hand side of \eqref{e:interIFT}  as an operator $F: H^k_{\delta} (\R^2) \times \R_+ \longrightarrow H^k_{\delta} (\R^2) $ for some appropriate $\delta \in \R$, and show that it is well defined,  smooth with respect to $\eps$, and that its Fr\'echet derivative $D_\psi F(0;0): H^k_{\delta} (\R^2) \longrightarrow H^k_{\delta} (\R^2)$ is invertible.
  The result is the following theorem.

\begin{Theorem}\label{t:inter}
Let $k \geq 2$, $\sigma \in (0,1)$ and consider functions $ g \in H^k_{r,\sigma}(\R^2)$ satisfying Hypothesis \ref{h:g}. Then, the intermediate equation
\[ \Delta_0 \phi - b(\partial_r \phi)^2 - \eps g =0,\]
has a family of solutions 
\[\phi(r; \eps) = -\frac{1}{b}\chi_M \log\Big( 1 + a \log r + \eps K \Big) + \phi_1(r;\eps) + \phi_2(r;\eps)+ \eps c, \qquad c \in \R,\]
 that bifurcates from  zero at $\eps =0$ and  is $C^1$ in $\eps \in [0,\infty)$.
 Moreover, letting $\gamma \in (1, \infty)$, and  $\sigma \in (0,1)$ be defined as in Hypothesis \ref{h:g}, 
 the family of solutions satisfies:
\begin{itemize}
\setlength \itemsep{1ex}
\item $\phi_1 \in \{ u \in M^{2,2}_{r,\gamma-2}(\R^2) : D^2u \in H^k_\gamma(\R^2)\}$, 
\item $\partial_r \phi_2 \in H^k_\delta(\R^2)$, where $\delta = \min(\gamma-1, \sigma)$.
\item $\Delta_0 K = g_f$, with $K \in R^k_\gamma$, and
\item $ a = -\eps b \int_0^\infty g_c(r) \;r \;dr$.
\end{itemize}
\end{Theorem}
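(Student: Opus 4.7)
The plan is to apply the implicit function theorem to equation~\eqref{e:interIFT}, viewing its left-hand side as an operator
\[ F: H^k_\delta(\R^2) \times \R \longrightarrow H^k_\delta(\R^2), \qquad \delta := \min(\gamma-1, \sigma), \]
and producing a $C^1$ family $\psi = \psi(\eps)$ of solutions bifurcating from $\psi(0) = 0$. The function $\phi_2$ is then recovered as a radial antiderivative of $\psi$, with the integration constant absorbed into the $\eps c$ term of the theorem, while the bulleted claims on $\phi_1$, $K$, and $a$ are inherited directly from the Lyapunov--Schmidt construction in Subsection~\ref{ss:firstapprox}.

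I would first verify that $F$ is well-defined and $C^1$. Because the cut-off $\chi_M$ forces $\phi_0$ to be supported in the annulus $\{1 \leq r \leq 2M\}$, the three contributions $\Delta_0 \phi_0$, $(\partial_r \phi_0)^2$, and $(a/b) \Delta_0(\chi \log r)$ to the forcing $G_1$ are compactly supported and belong to $H^k_\delta$ for any weight, while $-\eps g_f \in H^k_\sigma \subset H^k_\delta$ by Hypothesis~\ref{h:g}. For the nonlinear pieces, $\phi_1 \in R^k_\gamma$ yields $\partial_r \phi_1 \in H^{k+1}_{\gamma-1}$; combined with the Sobolev embedding $H^k(\R^2) \hookrightarrow L^\infty$ (valid for $k \geq 2$) this closes products such as $\partial_r \phi_0 \cdot \psi$, $\partial_r \phi_1 \cdot \psi$, $(\partial_r \phi_1)^2$, and $\psi^2$ in $H^k_\delta$, where the constraint $\delta \leq \gamma - 1$ is what forces the stated choice of weight. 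Boundedness and $C^1$-dependence of the preconditioner $\mathcal{L}_\lambda^{-1}$ in $\lambda$ come from Lemmas~\ref{l:Llambda} and~\ref{l:contlambda}, and the $C^1$-dependence of $\phi_0$, $\phi_1$, $G_1$ on $\eps$ is built into their construction in Subsection~\ref{ss:firstapprox}.

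The main obstacle is the invertibility of the Fréchet derivative at the base point. At $\eps = 0$ the constant $a$ of~\eqref{e:constantA} vanishes, which makes $\phi_0$, $\phi_1$, and $G_1$ all identically zero, so $F(0, 0) = 0$ and the linearization collapses to
\[ D_\psi F(0, 0)\, h \;=\; h - \lambda\, \mathcal{L}_\lambda^{-1} h. \]
A direct computation using $\mathcal{L}_\lambda = \mathcal{L}_0 + \lambda$ yields the algebraic identity $I - \lambda \mathcal{L}_\lambda^{-1} = \mathcal{L}_\lambda^{-1} \mathcal{L}_0$, from which a formal inverse can be written down explicitly. Establishing its boundedness on $H^k_\delta$ amounts to a weighted $L^1$-type estimate on the kernel $\lambda e^{-\lambda z}$, of the sort carried out in the proof of Lemma~\ref{l:Llambda}; the delicate point is that the unweighted $L^1$-norm of this kernel equals $1$ exactly, so any contractivity must be extracted either from the weight $\langle r \rangle^\delta$ or from a Neumann-series argument exploiting the smallness of $\lambda$, and it is here that fixing $\lambda > 0$ sufficiently small (rather than merely nonzero) is essential.

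With invertibility of $D_\psi F(0, 0)$ in hand, the implicit function theorem produces a $C^1$ curve $\eps \mapsto \psi(\eps) \in H^k_\delta$ on an interval $[0, \eps_0)$ with $\psi(0) = 0$. Integrating $\psi$ in the radial variable recovers $\phi_2$, and assembling $\phi = \phi_0 + \phi_1 + \phi_2 + \eps c$ produces the ansatz claimed in the theorem with the stated decay, regularity, and identity for $a$.
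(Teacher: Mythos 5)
Your proposal follows essentially the same route as the paper's proof: you precondition with $\mathcal{L}_\lambda^{-1}$, view the left-hand side of \eqref{e:interIFT} as an operator $F:H^k_\delta(\R^2)\times\R\to H^k_\delta(\R^2)$ with $\delta=\min(\gamma-1,\sigma)$, check well-definedness via the compact support of the $\phi_0$-terms, the inclusion $g_f\in H^k_\sigma$, and the product estimates (Lemmas \ref{l:product} and \ref{l:G1}), verify that $D_\psi F(0;0)$ is a small perturbation of the identity for $\lambda$ small, and then apply the implicit function theorem, recovering $\phi_2$ by radial integration and inheriting the claims on $\phi_1$, $K$, and $a$ from Subsection \ref{ss:firstapprox}. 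Your extra observation that $I-\lambda\mathcal{L}_\lambda^{-1}=\mathcal{L}_\lambda^{-1}\mathcal{L}_0$, and your remark on where the smallness of $\lambda$ enters, are consistent with (and no less rigorous than) the paper's Neumann-series argument, so the proposal is correct.
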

\begin{proof}
As already mentioned, the result follows from finding solutions to equation \eqref{e:interIFT} using the implicit function theorem. We therefore consider the left hand side of this equation as an operator $F: H^k_{\delta} (\R^2) \times \R_+ \longrightarrow H^k_{\delta} (\R^2)$, with $\delta = \min(\gamma-1, \sigma) >0$.

Since the operator's dependence on $\eps$ comes from the three functions $\partial_r \phi_0, \partial_r \phi_1,$ and $G_1$, and since these functions are all smooth with respect to $\eps$ on the interval $[0,\infty)$, then the same result holds for the operator $F$.

To show that the Fr\'echet derivative $D_\psi F(0;0) = Id - \lambda \mathcal{L}_\lambda^{-1}$ is invertible, we recall the results from Section \ref{s:preliminaries}. In particular, Lemma \ref{l:Llambda} shows that $\mathcal{L}_\lambda^{-1}: H^k_{\delta}(\R^2) \longrightarrow H^{k+1}_{\delta}(\R^2)$ is bounded. Since the embedding $H^{k+1}_{\delta}(\R^2) \subset H^{k}_{\delta}(\R^2)$ is continuous, it then follows that  $D_\psi F(0;0) :  H^k_{\delta}(\R^2) \longrightarrow H^k_{\delta}(\R^2)$  is a small perturbation of the identity operator, and is thus invertible for a sufficiently small $\lambda$.

To complete the proof we need to show that the operator $F$ is well defined.
Taking into account the results of Lemma \ref{l:Llambda}, this is equivalent to showing that the expression
\[N(\psi,\eps) = -b \Big ( 2 \partial_r \phi_0( \partial_r \phi_1 + \psi) + ( \partial_r \phi_1 + \psi)^2 \Big) + G_1 - \lambda \psi,\]
defines a bounded operator 
$N: H^k_{\delta}(\R^2) \times \R_+ \longrightarrow H^{k-1}_{\delta}(\R^2)$.

We start with the term  $\partial_r \phi_0( \partial_r \phi_1 + \psi)$. 
From the definition of $\phi_1$ we know that this is a function in $R^k_\gamma$ with $\gamma>1$. In particular,
\[ \partial_r \phi_1 \in \{ u \in M^{1,2}_{r,\gamma-1}(\R^2) : Du \in H^k_{\gamma}(\R^2) \} \subset H^{k+1}_{\gamma-1}(\R^2).\]
Because $\psi \in H^k_\delta(\R^2)$ and $\delta = \min(\gamma-1, \sigma) >0$ , it then follows that the sum $( \partial_r \phi_1 + \psi)$ is also in this space.
Since $\partial_r \phi_0$ has compact support and $k+1$ bounded derivatives (see Remark \ref{r:phi0}), then the product
 $\partial_r \phi_0( \partial_r \phi_1 + \psi)$ is also well defined in $H^k_{\delta}(\R^2)$.

Next, since $( \partial_r \phi_1 + \psi)$ is in $H^k_\delta(R^2)$, with $\delta>0$ and $k \geq 2$, Lemma \ref{l:product} below shows that $( \partial_r \phi_1 + \psi)^2$ is in $H^{k-1}_\delta(\R^2)$. Finally, 
Lemma \ref{l:G1} at the end of this section shows that $G_1 \in H^k_\sigma(\R^2)$, and because $\delta = \min(\gamma-1, \sigma)>0$, this term is also well defined. 

Since the operator $F$ satisfies the assumptions of the implicit function theorem we obtain a family of solutions $\psi(r; \eps)$ that bifurcates from zero and is smooth with respect to $\eps$.
Because $\psi = \partial_r \phi_2$,
we arrive at the family
\[ \phi(r;\eps) = \phi_0(r; \eps) + \phi_1(r;\eps) + \phi_2(r;\eps) +\eps c, \quad c\in \R. \]
This finishes the proof of the Theorem.

\end{proof}

\begin{Lemma}\label{l:product}
Let $\psi \in H^k_\gamma(\R^2)$ with $\gamma >0$ and $k \geq 2$. Then,  $\psi^2 \in H^{k-1}_\gamma(\R^2)$.
\end{Lemma}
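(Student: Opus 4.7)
The strategy is a standard Leibniz-plus-Sobolev-embedding argument, with the only subtlety being to verify that the weight $\langle x\rangle^\gamma$ can always be placed on the factor carrying the higher-order derivative, so that the lower-order factor may be absorbed in $L^\infty$ via the two-dimensional Sobolev embedding $H^2(\R^2)\hookrightarrow L^\infty(\R^2)$.

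First I would fix a multi-index $\alpha$ with $|\alpha|\leq k-1$ and apply the Leibniz rule to write
\[
D^\alpha(\psi^2)\;=\;\sum_{\beta\leq \alpha}\binom{\alpha}{\beta}\,D^\beta\psi\cdot D^{\alpha-\beta}\psi,
\]
so that the claim reduces to bounding each term $\|\langle x\rangle^\gamma\,D^\beta\psi\cdot D^{\alpha-\beta}\psi\|_{L^2}$ by a constant multiple of $\|\psi\|_{H^k_\gamma}^{2}$. Since the roles of $\beta$ and $\alpha-\beta$ are symmetric, I may assume $|\beta|\leq|\alpha-\beta|$, which in particular gives $2|\beta|\leq|\alpha|\leq k-1$, hence $|\beta|\leq(k-1)/2$.

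The key observation is that for every $k\geq 2$ this forces $|\beta|\leq k-2$. Indeed, $(k-1)/2\leq k-2$ whenever $k\geq 3$, and for $k=2$ the only admissible pair is $|\beta|=0\leq k-2=0$. Since $\psi\in H^k_\gamma(\R^2)$ with $\gamma>0$, the natural embedding $H^k_\gamma(\R^2)\subset H^k(\R^2)$ (already noted in Section~\ref{s:preliminaries}) yields $D^\beta\psi\in H^{k-|\beta|}(\R^2)\subset H^2(\R^2)$, and then the two-dimensional Sobolev embedding gives
\[
\|D^\beta\psi\|_{L^\infty(\R^2)}\;\leq\;C\,\|D^\beta\psi\|_{H^2(\R^2)}\;\leq\;C\,\|\psi\|_{H^k_\gamma(\R^2)}.
\]
On the other hand, putting the weight on the higher-order factor and using $|\alpha-\beta|\leq|\alpha|\leq k-1\leq k$, we have
\[
\|\langle x\rangle^\gamma\,D^{\alpha-\beta}\psi\|_{L^2(\R^2)}\;\leq\;\|\psi\|_{H^k_\gamma(\R^2)}.
\]
Combining these two bounds controls each Leibniz term by $C\|\psi\|_{H^k_\gamma(\R^2)}^{2}$, and summing over $|\alpha|\leq k-1$ and over $\beta\leq\alpha$ yields $\|\psi^2\|_{H^{k-1}_\gamma(\R^2)}\leq C\|\psi\|_{H^k_\gamma(\R^2)}^{2}$, which is the claim.

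The only point that requires any care, and which I would highlight, is the numerology $|\beta|\leq k-2$: it is precisely this inequality which guarantees that one of the two factors in every Leibniz summand lies in $L^\infty$, and it is the reason the lemma is stated for $k\geq 2$ rather than $k\geq 1$ (in $\R^2$ one does not have $H^1\hookrightarrow L^\infty$). No other obstacles arise; the weighted version is no harder than the unweighted one because the weight $\langle x\rangle^\gamma$ is a smooth positive function and can always be assigned to the factor for which the weighted $L^2$ norm is available.
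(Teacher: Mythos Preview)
Your proof is correct and follows essentially the same approach as the paper: Leibniz expansion of $D^\alpha(\psi^2)$, the two-dimensional Sobolev embedding $H^2(\R^2)\hookrightarrow L^\infty(\R^2)$ applied to the lower-order factor (using $\gamma>0$ to drop the weight), and placing the weight $\langle x\rangle^\gamma$ on the higher-order factor. Your write-up is in fact slightly more careful than the paper's in spelling out the numerology $|\beta|\leq k-2$ and why $k\geq 2$ is needed, but the argument is the same.
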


\begin{proof}
To simplify the analysis we let
 $D^j$ denote any $j$-th order derivative, and
we only prove that $D^{k-1}(\psi^2) $ is in $L^2_{r,\gamma}(\R^2)$, since a similar analysis shows that lower
derivatives are in this same space. Because $k \geq 2$ and $\gamma>0$, it follows by Sobolev embeddings that
$D^{j}\psi \in H^2_{r,\gamma}(\R^2) \subset H^2(\R^2) \subset C_B(\R^2)$ for  $0\leq j \leq k-2$. Then, writing
\[ D^{k-1}(\psi^2) = \sum_{j=0}^{k-1} {k-1 \choose j} D^{k-1-j}\psi D^j \psi \]
 we see that this derivate can be written as a product of a bounded function and a function that is in $L^2_{r,\gamma}(\R^2)$. Hence $\psi^2 \in H^{k-1}_{r,\gamma}(\R^2)$.
\end{proof}

The next Lemma shows that $G_1$ is in $H^k_\sigma(\R^2)$ with $\sigma \in (0,1)$.
\begin{Lemma}\label{l:G1}
Let $k\geq 2$, $\sigma \in (0,1)$ and take $g_f \in H^k_\sigma(\R^2)$. Consider the function $\phi_0$ constructed from $g_f$ and described above in \eqref{e:approx0}. Then the expression
\[G_1  = \Delta_0 \phi_0 - b( \partial_r \phi_0)^2 - \eps g_f +\frac{a}{b} \Delta_0 ( \chi \log r) \]
is also in $H^k_\sigma(\R^2)$.
\end{Lemma}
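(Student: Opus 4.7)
My plan is to split the plane into the regions determined by the supports of the two cut-offs $\chi_M$ and $\chi$, compute $G_1$ explicitly on each piece, and then verify that every piece lies in $H^k_\sigma(\R^2)$. Write $u = 1 + a\log r + \eps K$ for brevity. The key algebraic simplification happens in the bulk region $\{2 < r < M\}$, where both cut-offs are identically $1$, so that $\phi_0 = -\frac{1}{b}\log u$ and $\chi \log r = \log r$. Using $\Delta_0 \log r = 0$ for $r>0$ together with the defining relation $\Delta_0 K = -b g_f$, the chain rule yields
\[
\Delta_0 \phi_0 - b(\partial_r \phi_0)^2 \;=\; -\frac{\Delta_0 u}{b u} \;=\; \frac{\eps g_f}{u},
\]
and since $\Delta_0(\chi \log r)=0$ on $\{r>2\}$, the formula for $G_1$ collapses to
\[
G_1\big|_{\{2<r<M\}} \;=\; \eps g_f \Bigl(\tfrac{1}{u} - 1\Bigr) \;=\; -\,\eps g_f\,\frac{a\log r + \eps K}{u}.
\]

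Next I would estimate this bulk expression. By the very choice of $M$, the function $u$ is confined to a fixed compact subinterval of $(0,\infty)$ on $\{r\le 2M\}$, so $(u-1)/u$ is bounded there. Derivatives of $(u-1)/u$ up to order $k$ are then controlled provided derivatives of $u$ are controlled: on $\{r>2\}$ the derivatives of $\log r$ are plainly bounded, while for $K$ I would use that $\partial_r K(r) = -\tfrac{b}{r}\int_0^r s g_f(s)\,ds$ is bounded at infinity since $g_f \in H^k_\sigma$ has finite first moment, and higher derivatives of $K$ are controlled by $g_f$ and its derivatives via $\Delta_0 K = -b g_f$ (using $K \in R^k_\gamma$). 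With $(a\log r + \eps K)/u$ and its derivatives bounded on $\{r>2\}$, the product rule and the hypothesis $g_f \in H^k_\sigma$ imply that the bulk piece lies in $H^k_\sigma$.

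The remaining regions are easy. For $r<1$ one has $\chi_M = \chi = 0$ and $g_f \equiv 0$ by Hypothesis \ref{h:g}, so $G_1 \equiv 0$. For $r>2M$ one has $\chi_M = 0$ and $\chi = 1$, so $\phi_0 = 0$ and $\Delta_0(\chi \log r)=0$, leaving $G_1 = -\eps g_f \in H^k_\sigma$ by assumption. In the two transition annuli $\{1<r<2\}$ and $\{M<r<2M\}$ the derivatives of the cut-offs generate additional terms involving $\partial_r \chi_M, \Delta_0 \chi_M, \partial_r \chi, \Delta_0 \chi$, each of which is smooth and compactly supported in a bounded annulus, multiplied by factors ($\log u$, $\partial_r u/u$, $\log r$, $g_f$) that are smooth and bounded on that annulus by the same boundedness of $u$ noted above. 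Each such contribution is therefore smooth with compact support, hence trivially in $H^k_\sigma$.

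The main obstacle I anticipate is Step 2: producing bounds on the derivatives of $(a\log r + \eps K)/u$ that are uniform in $r$ on a region whose size $M$ depends on $\eps$. The definition of $M$ is designed precisely so that $u$ stays in a fixed compact subinterval of $(0,\infty)$, which is what keeps $1/u$ and its derivatives controlled; once that is in hand, the argument reduces to a routine product-rule estimate leveraging the $H^k_\sigma$-decay of $g_f$ and the regularity $K \in R^k_\gamma$ inherited from $\Delta_0 K = -b g_f$.
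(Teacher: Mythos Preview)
Your approach is essentially the paper's: both hinge on the identity $\Delta_0\phi_0 - b(\partial_r\phi_0)^2 = -\tfrac{1}{b}\,\Delta_0 u/u$ together with $\Delta_0 u = -\eps b g_f$ (from $\Delta_0\log r=0$ and $\Delta_0 K=-bg_f$), which collapses $G_1$ to $-\eps g_f\,(a\log r+\eps K)/u$ plus compactly supported remainders. The paper merely organizes this as a single global product-rule expansion of $\phi_0=\chi_M\tilde\phi_0$ rather than a region-by-region computation, but the algebraic content and the final terms are identical.

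One point needs correcting: your claim that $g_f\in H^k_\sigma$ has finite first moment is false --- that is precisely the ``large inhomogeneity'' regime $g_f\sim r^{-m}$, $1<m\le 2$, under study, for which $\int_0^\infty s\,g_f(s)\,ds$ diverges. Fortunately you do not need it. The bulk region $\{2<r<M\}$ is \emph{bounded}, so $(a\log r+\eps K)/u$ and its derivatives up to order $k$ are automatically bounded there (by smoothness of $K$ and the choice of $M$ keeping $u$ away from zero), and the product with $g_f$ lies in $H^k_\sigma$ simply because it has compact support. For the same reason your anticipated obstacle about uniformity in $M(\eps)$ is a non-issue: the lemma is for fixed $\eps$, and on a fixed bounded annulus nothing is at stake.
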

\begin{proof}
Using the notation $\phi_0 = \chi_M \tilde{\phi}_0$, we first expand $G_1$ 
\begin{align*}
G_1 = &\Delta_0 \phi_0 - b( \partial_r \phi_0)^2 - \eps g_f +\frac{a}{b} \Delta_0 ( \chi \log r) \\[1ex]
G_1  = &  \left( \tilde{\phi}_0 \Delta_0 \chi_M  + 2 \chi_M' \tilde{\phi}_0 
-b ( \chi_M' \tilde{\phi}_0)^2 - 2b \chi_M'  \chi_M \tilde{\phi}_0 \partial_r \tilde{\phi}_0
+b ( \partial_r \tilde{\phi}_0)^2 ( \chi_M- \chi_M^2) \right) \\
& - \frac{1}{b} \chi_M\left[ \frac{ \Delta_0 ( a \log r + \eps K)}{1 + a \log r + \eps K}\right] - \eps g_f + \frac{a}{b}\Delta_0 ( \chi \log r)   \\[1ex]
G_1  = &  \left( \tilde{\phi}_0 \Delta_0 \chi_M  + 2 \chi_M' \tilde{\phi}_0 
-b ( \chi_M' \tilde{\phi}_0)^2 - 2b \chi_M'  \chi_M \tilde{\phi}_0 \partial_r \tilde{\phi}_0
+b ( \partial_r \tilde{\phi}_0)^2 ( \chi_M- \chi_M^2) \right) \\
& - \frac{1}{b} \chi_M \left[ \frac{ \Delta_0  a \log r }{1 + a \log r + \eps K} \right]  - \frac{1}{b} \chi_M\left[ \frac{ \eps \Delta_0  K }{1 + a \log r + \eps K}  \right] - \eps g_f +  \frac{a}{b} \Delta_0 (\chi \log r). \\[2ex]
\end{align*}

Because the $\log r$ is a fundamental solution of the Laplacian and since $\chi_M$ is zero near the origin, then the term
\[\chi_M \left[ \frac{ \Delta_0  a \log r }{1 + a \log r + \eps K} \right] =0.\]
Similarly, because the function $K$ is a solution to $\Delta_0 K +  b g_f=0$, then we may write
\begin{align*}
-\frac{1}{b} \chi_M\left[ \frac{ \eps \Delta_0  K }{1 + a \log r + \eps K}   \right] - \eps g_f  = & - \eps g_f (1- \chi_M) - \eps g_f \chi_M\left[ \frac{ ( a \log r + \eps K)  }{1 + a \log r + \eps K}  \right].
\end{align*}
Therefore,
\begin{align*}
G_1  = &  \left( \tilde{\phi}_0 \Delta_0 \chi_M  + 2 \chi_M' \tilde{\phi}_0 
-b ( \chi_M' \tilde{\phi}_0)^2 - 2b \chi_M'  \chi_M \tilde{\phi}_0 \partial_r \tilde{\phi}_0
+b ( \partial_r \tilde{\phi}_0)^2 ( \chi_M- \chi_M^2) \right) \\
& - \eps g_f (1- \chi_M) - \eps g_f \chi_M\left[ \frac{ ( a \log r + \eps K)  }{1 + a \log r + \eps K}  \right]
+  \frac{a}{b} \Delta_0 (\chi \log r) 
\end{align*}

From the definition of $\chi_M$ it is clear that all terms involving a derivative of this function are localized
and have compact support. 
Because the value of $M$ in the definition of $\chi_M$ was chosen
 to vanish whenever the expression $1 + a \log r + \eps K $ is $\leq 0$,
 we see that the term in brackets is also bounded
and with compact support. In addition, this term is as regular as the function $K \in R^k_\sigma$, and
is therefore in $H^{k+2}(\R^2)$.
On the other hand, the function $(1- \chi_M) \eps g_f$ behaves like $g_f$ at infinity and as a result it is in 
the same space as the inhomogeneity. Finally,  because $ \Delta_0 \log r = 0 $ on $\R^2 \setminus \{0\}$, the function $\Delta_0 (\chi \log r) $ is localized and smooth.
Taking all this into account, we may conclude that $G_1$ is in the space $H^k_\sigma(\R^2)$.

\end{proof}

\section{Far Field Approximation to the Viscous Eikonal Equation}\label{s:far}
In this section we consider again the full equation
\begin{equation}\label{e:far}
\Delta_0 \phi - b(\partial_r \phi) ^2 - \eps g(r) + \Omega =0 \qquad r= |x| \in [0,\infty),
\end{equation}
but assume that the value of $\Omega$ is fixed and different from zero. As in Section \ref{s:inter},
we first find 
 an appropriate expression for the far field behavior of the solution and a
first order approximation for this new equation.
We then use this result to prove existence 
of solutions using the implicit function theorem. 

Because the inhomogeneity is algebraically decaying, for large values of $r$ the relevant terms in the equation are
\[ \Delta_0 \phi - b(\partial_r \phi) ^2 + \Omega =0\]
To find a first order approximation, we can again use the Hopf-Cole transform, $\phi(r) = -(1/b) \log( K) $, rewriting the equation as
\[ \partial_{rr} K + \frac{1}{r} \partial_r K - \Lambda^2 K =0 \qquad \Lambda^2 = b \Omega. \]
Notice that this is either a Bessel, or the Modified Bessel equation, depending on the sign of $b \Omega$. Because we are interested in solutions, $\phi(r)$, that are real, we pick $b\Omega>0$ so that the solution to this last equation is $K=K_0$, the Modified Bessel function of the second kind. In particular, because $K_0(z) \sim \rmO(e^{-z})$ as $z \to \infty$
( see Table \ref{t:bessel} below), this implies that $\partial_r \phi(r) $ is bounded in the far field, as desired.

We therefore consider the ansatz 
\[\phi(r) = \phi_0(r) + \phi_1(r)\]
where $\phi_0$ is given by
\[ \phi_0(r) = -\frac{1}{b} \chi(\Lambda r) \log( K_0(\Lambda r)), \qquad \Lambda^2 = b \Omega >0.\]
Here, again $\chi$ represents a cut-off function that removes the singular behavior of the log function near the origin.
Inserting this expression into equation \eqref{e:far} gives
\[ \Delta_0 \phi_1 - 2b \partial_r \phi_0 \partial_r \phi_1 - b(\partial_r \phi_1)^2 
+ ( \Delta_0 \phi_0 - b(\partial_r \phi_0)^2 + \Omega) - \eps g =0.\]
Since the terms appearing in the parenthesis represent a localized function with compact support,
they do not contribute to the behavior of the solution for large values of $r$. Thus, the far field behavior of the solution is determined by 
\begin{equation}\label{e:far2}
 \Delta_0 \phi_1 - 2b \partial_r \phi_0 \partial_r \phi_1 - b(\partial_r \phi_1)^2 
 - \eps g =0.
 \end{equation}

Letting $\psi = \partial_r \phi_1$ and adding and subtracting the term $2 \Lambda \psi$ gives us
\[ \partial_r \psi + \frac{1}{r} \psi + 2 \Lambda \psi  + \left[ - 2b \partial_r \phi_0 \psi - b\psi^2 
- \eps g - 2 \Lambda \psi \right] =0.\]

We can then precondition this equation by $\mathcal{L}_{2 \Lambda}^{-1}$, since by Lemma \ref{l:Llambda} 
 we know that this operator is bounded for all values of $\Lambda>0 $, if its domain is $H^k_{r,\sigma}(\R^2)$.
 Thus, the equation can be written as
\begin{equation}\label{e:ift}
F(\psi; \eps) =  \mathrm{Id} +  \mathcal{L}_{2 \Lambda}^{-1}  \Big[ - 2b \partial_r \phi_0 \psi - b\psi^2 
 - \eps g - 2\Lambda \psi \Big] =0. 
\end{equation}

In what follows we will show that the operator 
$F: H^k_{r,\sigma}(\R^2) \times \R  \rightarrow H^{k}_{r, \sigma}(\R^2)$ 
satisfies the conditions of the implicit function theorem and prove the following theorem.

\begin{Theorem}\label{t:far}
Take $g \in H^k_{r,\sigma}$ with $k \geq 2$ and let $\sigma \in (0, 1)$. Then there exist a positive constant $\Lambda_0$ such that for any fixed $\Lambda \in (0,\Lambda_0)$, there is an
 $\eps_0>0$, and $C^1$  family of solutions, $\phi = \phi(r; \eps)$,
to equation \eqref{e:far2} that bifurcates from zero at $\eps =0$  and  is valid for $\eps \in (-\eps_0, \eps_0)  $. 
Moreover, this family has the form
\[ 
\phi(r;\eps) = -\frac{1}{b} \chi(\Lambda r) \log( K_0(\Lambda r)) + \phi_1(r; \eps) +\eps c  \]
where
\begin{enumerate}[i)]
\item $K_0(z)$ represents the zeroth-order Modified Bessel function of the second kind,
\item $ \partial_r \phi_1 \in H^{k+1}_{r,\sigma}(\R^2)$,
\item and $c \in \R$ is an arbitrary constant.
\end{enumerate}
\end{Theorem}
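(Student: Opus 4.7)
The plan is to verify the hypotheses of the implicit function theorem for the map
$F \colon H^k_{r,\sigma}(\R^2) \times \R \to H^k_{r,\sigma}(\R^2)$
defined by equation \eqref{e:ift}, in close parallel with the proof of Theorem \ref{t:inter}. The role of the first-order approximation is now played by $\phi_0 = -(1/b)\chi(\Lambda r)\log K_0(\Lambda r)$, built so that it solves the homogeneous equation $\Delta_0\phi - b(\partial_r\phi)^2 + \Omega = 0$ exactly on the far-field set $\{r > 2/\Lambda\}$. Consequently the residual $G_0 := \Delta_0\phi_0 - b(\partial_r\phi_0)^2 + \Omega$ is supported inside the cutoff region and is smooth there.

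First, I would verify that $F$ is well-defined and $C^1$. The linear multiplier $\partial_r\phi_0$ lies in $W^{k,\infty}(\R^2)$ because the cutoff removes the logarithmic singularity of $K_0$ at the origin, while the asymptotic $K_0'/K_0 \to -1$ implies $\partial_r\phi_0 \to \Lambda/b$ at infinity; hence $\psi \mapsto \partial_r\phi_0\,\psi$ is bounded on $H^k_{r,\sigma}(\R^2)$. The quadratic term $\psi^2 \in H^{k-1}_{r,\sigma}(\R^2)$ by Lemma \ref{l:product}. The inhomogeneity $\eps g \in H^k_{r,\sigma}(\R^2)$ by Hypothesis \ref{h:g}. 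The residual $G_0$ is smooth with support contained in $\{r \leq 2/\Lambda\}$, hence in every weighted Sobolev space. Applying Lemma \ref{l:Llambda}, $\mathcal{L}_{2\Lambda}^{-1}$ is bounded from $H^{k-1}_{r,\sigma}(\R^2)$ into $H^k_{r,\sigma}(\R^2)$, so each bracketed contribution is returned to the target space; the polynomial-affine structure of $F$ in $(\psi,\eps)$ yields the required $C^1$ regularity.

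Second, I would show invertibility of the Fr\'echet derivative
\[
D_\psi F(0;0)[\delta\psi] = \delta\psi + \mathcal{L}_{2\Lambda}^{-1}\bigl[(-2b\partial_r\phi_0 - 2\Lambda)\delta\psi\bigr].
\]
The multiplier $(-2b\partial_r\phi_0 - 2\Lambda)$ has $L^\infty$ norm of order $\Lambda$, so the correction to the identity is structurally the same as that in Theorem \ref{t:inter} (a bounded preconditioner composed with a small multiplication operator). A Neumann-series argument then establishes invertibility provided $\Lambda < \Lambda_0$ is sufficiently small.

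The main obstacle, absent from Theorem \ref{t:inter}, is that $F(0;0) = \mathcal{L}_{2\Lambda}^{-1}[G_0]$ is nonzero: $\phi_0$ does not solve the equation near the origin, where the cutoff forces $\phi_0 \equiv 0$ and leaves behind the constant $\Omega$. I would overcome this by proving the quantitative smallness estimate
\[
\|G_0\|_{L^2_\sigma(\R^2)} = \rmO\bigl(\Lambda^{1-\sigma}\bigr) \qquad \text{as } \Lambda \to 0,
\]
which follows from $G_0 \equiv \Omega = \Lambda^2/b$ on $\{r < 1/\Lambda\}$ together with the restriction $\sigma \in (0,1)$ of Hypothesis \ref{h:g}; analogous estimates for the derivatives of $G_0$, whose support sits in the transition annulus of width $\rmO(1/\Lambda)$, show that $\|F(0;0)\|_{H^k_{r,\sigma}}$ is likewise small for $\Lambda$ small. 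Combining this with the invertibility of $D_\psi F(0;0)$ from the previous step, a contraction argument on a small ball in $H^k_{r,\sigma}(\R^2)$ yields a unique base point $\psi_*(\Lambda)$ solving $F(\psi_*;0) = 0$. The standard implicit function theorem applied at $(\psi_*(\Lambda), 0)$ then produces the $C^1$ family $\psi(r;\eps) = \partial_r\phi_1(r;\eps)$ for $\eps \in (-\eps_0, \eps_0)$, and integrating in $r$ with arbitrary integration constant $\eps c$ gives the stated form of $\phi(r;\eps)$, completing the proof.
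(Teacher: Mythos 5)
Your proposal follows the paper's proof of Theorem \ref{t:far} almost step for step: the same operator $F$ from \eqref{e:ift} on $H^k_{r,\sigma}(\R^2)\times\R$, the same verification that the nonlinear terms land in $H^{k-1}_{r,\sigma}(\R^2)$ (compactly supported residual of $\phi_0$, boundedness of $\partial_r\phi_0$, Lemma \ref{l:product} for $\psi^2$, Lemma \ref{l:Llambda} for $\mathcal{L}_{2\Lambda}^{-1}$), and the same invertibility argument for $D_\psi F(0;0)$ via the factorization $-2b\partial_r\phi_0-2\Lambda=2\Lambda\,[\,\chi'\log K_0+\chi K_0'/K_0-1\,]$ with $K_0'/K_0\to-1$. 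The one place you genuinely diverge is your treatment of the base point: you observe that $F(0;0)=\mathcal{L}_{2\Lambda}^{-1}[G_0]\neq 0$ because $\phi_0$ vanishes inside the cutoff and leaves the residual $\Omega$ there, and you repair this with the estimate $\|G_0\|_{H^k_\sigma}=\rmO(\Lambda^{1-\sigma})$ (which is where the hypothesis $\sigma\in(0,1)$ enters) plus a contraction/Newton step producing $\psi_*(\Lambda)$ before invoking the implicit function theorem at $(\psi_*(\Lambda),0)$. The paper instead applies the implicit function theorem directly at $(0;0)$ and does not comment on the nonzero residual, so your extra step is not a detour but a patch of a point the paper leaves implicit; it is sound provided the operator norm of $\mathcal{L}_{2\Lambda}^{-1}$ on $H^k_{r,\sigma}$ is controlled uniformly for the fixed small $\Lambda$ under consideration, which is exactly the same quantitative input the paper's own ``small perturbation of the identity'' claim rests on. In short: same route, with a legitimate and useful refinement at the base-point step.
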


\begin{proof}
Because finding solutions to equation \eqref{e:far2} is equivalent to finding the zeros of the operator $F: H^k_{r,\sigma}(\R^2) \times \R  \rightarrow H^{k}_{r, \sigma}(\R^2) $ defined in 
\eqref{e:ift}, we check that $F$ satisfies the assumptions of the implicit function theorem.

It is clear that  $ F(0;0) =0$  and that this operator is smooth with respect to the parameter $\eps$. 
To check that the Fr\'echet derivative, $D_\psi F(0; 0): H^k_{r,\sigma}(\R^2) \rightarrow H^k_{r,\sigma}(\R^2)$,
given by
\[ D_\psi F(0;0) = Id + \mathcal{L}_{2\Lambda}^{-1} [ -2b \partial_r \phi_0 - 2\Lambda],\]
is invertible, notice that the term in the brackets can be written as the product of a bounded function times the constant $2\Lambda$. Indeed, this can be checked by expanding this term,
\begin{align*}
  -2b \partial_r \phi_0 -2 \Lambda & = -2b \partial_r[ \; - \frac{1}{b} \chi(\Lambda r) \log[ K_0(\Lambda r)] ] - 2\Lambda\\
&= 2 \Lambda \Big[ \chi'(\Lambda r)  \log[ K_0(\Lambda r)]  + \chi (\Lambda r) \frac{K_0'(\Lambda r) }{ K_0(\Lambda r)}  - 1\Big]
\end{align*}
and using the fact that the ratio $\frac{K'_0(z)}{K_0(z)} = -1$ as $r \to \infty$, and that $\chi'$ has compact support.
Since the operator $\mathcal{L}_{2\Lambda}^{-1} : H^{k}_{r,\sigma} (\R^2) \longrightarrow  H^{k+1}_{r,\sigma}(\R^2)$
is bounded, it follows that there is a small number $\Lambda_0>0$ such that if $\Lambda \in (0, \Lambda_0)$, the derivative  $D_\psi F(0;0)$ is a small perturbation of the identity
and is therefore invertible.

We are left with showing that the operator $F$ is well defined. 
Taking into account again that the map
$\mathcal{L}_{2\Lambda}^{-1} : H^{k-1}_{r,\sigma} (\R^2) \longrightarrow  H^{k}_{r,\sigma}(\R^2)$
is bounded, this is equivalent to showing that the terms
 \[  - 2b \partial_r \phi_0 \psi - b\psi^2 
 - \eps g - 2\Lambda \psi  \]
define a bounded operator $N: H^k_{r,\sigma}(\R^2) \times \R  \rightarrow H^{k-1}_{r, \sigma}(\R^2)$.

First, notice  that by assumption, the impurity $g $ is in the desired space.
As for the elements involving the variable $\psi \in H^k_{r,\sigma}(\R^2)$, because the derivative $\partial_r \phi_0$ is a bounded function, we can easily check that they are both in the space $H^{k-1}_{r,\sigma}(\R^2)$.
Finally, since $\sigma>0$, Lemma \ref{l:product} shows that the product $\psi^2$ is in $H^{k-1}_{r,\sigma}(\R^2)$.

 This proves that the operator $F$ satisfies the conditions of the implicit function theorem and proves the existence of a family of solutions solving $F(\psi;\eps) =0$ .
Going back to the definition of $\psi = \partial_r \phi_1$, we see that the above result
also gives us a family of solutions $\phi(r;\eps) = \phi_0(r) + \phi_1(r; \eps)+\eps c$ solving the far field equation \eqref{e:far}, where $\phi_1 \in H^{k+1}_{r,\sigma}(\R^2)$ and $c$ is for now an arbitrary constant which is the result of integrating $\psi$. This proves the result of the theorem.
\end{proof}

\section{Existence of Target Patterns}\label{s:existence}

\subsection{Matching}\label{ss:match}

To determine an expression for the eigenvalue $\Omega$,  we must match the intermediate and far field approximations of the wavenumber, $\partial_r \phi$. For convenience we recall their expressions,
\begin{align*}
\phi_{far}(r;\eps,\Lambda) = & -\frac{1}{b} \chi(\Lambda r) \log( K_0(\Lambda r)) + \phi_1(r; \eps) + \eps c  \\
\phi_{int}(r; \eps) = & -\frac{1}{b} \chi_M \log\Big( 1 + a  \log r + \eps K(r) \Big) +\bar{ \phi}_1(r;\eps) + \bar{\phi}_2(r;\eps)+ \eps c,
\end{align*}
As before, $K_0$ denotes the Modified Bessel function of the first kind, while the function $K$ satisfies
\[ \Delta_0 K + b g_f =0.\]
Notice that the remaining terms, $\phi_1,\bar{ \phi}_1,$ and $ \bar{\phi}_2$, all have derivatives that decay algebraically at infinity. In particular, 
\begin{enumerate}[1.]
\item The function $\bar{\phi}_1$ defined in Subsection \ref{ss:firstapprox} is in the space $R^k_\gamma \subset M^{2,2}_{r,\gamma-2}$, with $\gamma>1$. From Lemma \ref{l:decay} it follows that $|\bar{\phi}_1| < |x|^{-\gamma+1}$.
In particular, if the inhomogeneity $g_c \sim \rmO(r^{-(d +2)})$, with $d>0$, we have that $\partial_r \bar{\phi}_1 \sim \rmO(r^{-(d+1)})$.
\item From Theorems \ref{t:inter} and \ref{t:far} we know that the functions $\partial_r \phi_1$ and $\partial_r \bar{\phi}_2$ are in the space $H^k_\delta(\R^2)$, where  $ \delta\in (0,1)$. It then follows from Sobolev embeddings that these functions are bounded. In addition, because $\delta>0$, they must decay algebraically.
\end{enumerate}

\begin{table}[t]
\begin{center}
\begin{tabular}{ m{2cm} m{5cm} m{4.5cm}  } 
\specialrule{.1em}{.05em}{.05em} 
  & $z \to 0$ & $z \to \infty $\\
  \hline
 $K_0(z) $ &$ - \log(z/2) - \gamma_e + \rmO(z^2) $ &$ \sqrt{ \frac{\pi}{2 z} } \rme^{-z} \Big( 1 + \rmO(1/z) \Big)  $ \\ [3ex]
$ K_1(z) $ &$ \frac{1}{z} + \rmO(z)$ &$ \sqrt{ \frac{\pi}{2 z} } \rme^{-z} \Big( 1 + \rmO(1/z) \Big)  $\\      
\specialrule{.1em}{.05em}{.05em} 
\end{tabular}
\end{center}
\caption{ 
Asymptotic behavior for the Modified Bessel functions of the second kind of zeroth and first-order, taken from \cite[(9.6.8), (9.6.9), (9.7.2)]{abramowitz} }
\label{t:bessel}
\end{table}

To do the matching,
recall from the analysis in Subsection \ref{s:inter} that the parameter $\Lambda^2 = b \Omega$ 
 is assumed to be small beyond all orders of $\eps$. 
  This justifies the scaling $r = \eta(\eps) r_\eta$, where $r_\eta$ is a constant
 and $\eta(\eps) = \eps/ \Lambda $. 
  As a result, $\Lambda r \to 0$  as $\eps \to 0$, while $r \to \infty $, and we find that for small value of $\eta$ we are in the region where both approximations are valid. Moreover, since $ \eps \sim \rmo(\eta(\eps))$ there is always an open interval
  where the two approximations can be matched, even as $\eps \to 0$. 
  Because in this region  
  the functions $\chi_M= \chi =1$, we obtain
 \begin{align*}
\partial_r\phi_{far}(r;\eps,\Lambda) & \sim  -\frac{1}{b}\; \left[ \Lambda \frac{K'_0(\Lambda r)}{K_0(\Lambda r)} \right] + \partial_r \phi_1 \\
\partial_r \phi_{int}(r;\eps) & \sim - \frac{1}{b} \;  \left[
\frac{a/ r+ \eps \partial_r K(r) }{1 + a \log r + \eps K(r)} \right]  + \partial_r \bar{\phi}_1+ \partial_r \bar{\phi}_2.
\end{align*}
Setting the derivatives equal to each other,  $\partial_r \phi_{far} = \partial_r \phi_{int}$, we find that
\begin{align*}
\Big( \Lambda K'_0(\Lambda r) - b K_0(\Lambda r) \partial_r \phi_1 \Big) &
 \left( 1 +  a \log r + \eps K(r) \right) \\
& = K_0(\Lambda r)\Big [ (a/ r +  \eps \partial_r K(r)) - b ( \partial_r \bar{\phi}_1 + \partial_r \bar{\phi}_2)( 1 +  a \log r + \eps K(r) )\Big] \\
\Big( -\Lambda \left( \frac{1}{ \Lambda r} + \rmO(\Lambda r) \right) & -b K_0(\Lambda r) \partial_r \phi_1 \Big) 
 \left( 1 + a \log r + \eps K(r)  \right) \\
 &=\Big( - \log( \Lambda /2) - \gamma_e  - \log(r) + \rmO( (\Lambda r)^2) \Big)\\
 &\qquad \times \Big [ (a/ r + \eps \partial_r K(r)) - b( \partial_r \bar{\phi}_1 + \partial_r \bar{\phi}_2)( 1 +  a \log r + \eps K(r) )\Big], 
\end{align*}
where in the second line we use the fact that $K'_0(z) = -K_1(z)$ and the expansions from
Table \ref{t:bessel}. 

We now proceed with the matched asymptotic analysis to determine the value of 
$\Lambda$. Notice that due to the relation $\Omega b = \Lambda^2$, this will also allow us to
obtain  an expression for the frequency.
The method is as follows: We first divide the above expression by different gage functions 
in order to select terms of similar order in $\eps$. We then cancel any duplicate terms, 
 let $\eps$ go to zero, and
select the value of any undefined constant so that the remaining terms add up to zero.

Because we are interested only in finding the value of the constant $\Lambda$, 
we can simplify these computations by noticing that terms of the form $ \rmO(\Lambda r) \partial_r \phi_1$
will go to zero, as $\eps \to 0$, faster than any other  term.
Thus,  they are not of the same order in $\eps$ as elements that involve $\Lambda$.
This follows from the scalings picked and the algebraic decay rate of the the function $\partial_r \phi_1$.
 We may therefore consider instead the  expression
\begin{align}\label{e:match1}
 -\frac{1}{r} \left( 1 + a \log r + \eps K(r)  \right) = &(- \log( \Lambda /2)  - \gamma_e  - \log(r) )\\
 \nonumber
 & \times  [ (a/ r + \eps \partial_r K(r)) - b( \partial_r \bar{\phi}_1 + \partial_r \bar{\phi}_2)( 1 +  a \log r + \eps K(r) )].
\end{align}
It is worth pointing out here that, in contrast to more standard matched asymptotic analyses,
the elements in equation \eqref{e:match1} are not of order $\rmO(\eps^n), n \in \N$. Moreover, we find that dominants terms depend on
the yet to be determined approximations $\bar{\phi}_i, i =1,2$. Thus, we will 
not be able to match them exactly, but we can justify that the process can be done.

First, looking at the right hand side, one notices that the dominant term is $ - \eps \log(r) \partial_r K$.
Because $r$ depends on $\eps$, we may use this function as a gage function. 
Dividing by $ - \eps \log(r) \partial_r K$ and letting $\eps \to 0$, or equivalently $r \to \infty$, 
we are left with matching,
\[ 0 =  1 - b \frac{ \partial_r \bar{ \phi}_1+ \partial_r \bar{ \phi}_2 }{ \eps \partial_r K} ( 1 + a \log r + \eps K). \]
By picking the value of $\delta \in (0,1)$ so that the higher order correction term, $\bar{\phi}_2 \in L^2_\delta(\R^2)$, is in the same
space as both, $K$ and $ \partial_r K$, we see that it is possible to match these terms. 
Expression \eqref{e:match1} then becomes
\begin{align*}
 -\frac{1}{r} \left( 1 + a \log r + \eps K(r)  \right) = &(- \log( \Lambda /2)  - \gamma_e  )   \\
 & \times [ (a/ r + \eps \partial_r K(r))- b( \partial_r \bar{\phi}_1 + \partial_r \bar{\phi}_2)( 1 +  a \log r + \eps K(r) )] \\
 &- \frac{a}{r} \log r.
\end{align*}

Second, cancelling the term $\frac{a}{r} \log r$, using $ \eps K/r$ as a gage function, and letting $r \to \infty$,
we obtain
\[ -1 = (- \log( \Lambda /2)  - \gamma_e  ) \left[ \frac{ \partial_r K}{r K } - \frac{b r}{ \eps K} ( \partial_r \bar{\phi}_1 + \partial_r \bar{\phi}_2)( 1 +  a \log r + \eps K(r) )\right ]. \]
Since $\bar{\phi}_i, i =1,2$ represent all higher order terms, and not just one function, one can again justify  that these terms can be matched.
As a result, equation \eqref{e:match1} now reads
\[ -\frac{1}{r} = ( - \log \Lambda/2 - \gamma_3) \frac{a}{r}.\]

Finally, solving for $\Lambda$, we see that $\Lambda=  2 \rme^{-\gamma_e} \exp(1/a)$. 
Using the relation $\Lambda^2 = b \Omega$, we also obtain that
\[\Omega =  \frac{4}{b}  \rme^{-2\gamma_e} \exp\left( \frac{1}{ a } \right).\]
In particular, from the definition of $a$, i.e. $a = -b \eps \int_0^\infty g_c(r) \;r\;dr$, we may conclude that both $\Lambda$ and 
$\Omega$ are smooth functions of $\eps$, for all $\eps \in (0, \eps_M)$, with $\eps_M$ a positive constant.
In addition, notice that as $\eps$ approaches zero, 
the value of $\Lambda$ and $\Omega$ also goes to zero.

\begin{Remark}\label{r:Omega}
Notice that:
\begin{enumerate}[1.]
\item We need the constant  $a<0$ in order for $\Lambda=b\Omega$ to satisfy our initial assumption of being small beyond all 
orders of $\eps$. If $\eps>0$, this condition is guaranteed from formula \eqref{e:constantA} and the assumption that $g$ is a positive function.
\item Notice also that if  $ \eps \int g <0$, the gradient $\partial_r \phi_{int}$ would 
also be negative and we would not be able to match the two approximations. 
This is in line with previous results which show that target pattern solutions (or thanks to the Hopf-Cole transform, $\phi = -\frac{1}{b} \log( \Psi) $, ground states of the Schr{\"o}dinger eigenvalue problem, $\Delta \Psi + \eps g \Psi$) do not exist when the inhomogeneity (potential) satisfies $\eps \int g <0$. See \cite{simon1979} for a proof of this result.

\item  Because we rigorously proved the existence of solutions to the intermediate and far field approximations,
we know that we can obtain approximations for $\phi_{far}$ and $\phi_{inter}$ to any desired order. 
Thus, by matching these higher order approximations, we can obtain better estimates for the
parameter $\Lambda$. In particular,
if we consider $a  = \eps a_1 + \eps^2 a_2 $, and find the corresponding expressions for $\phi_{far}, \phi_{inter}$ and $ a_2$,
the above matching process leads to $\Lambda = C(\eps)  2 \rme^{-\gamma_e} \exp(1/\eps a_1)$,
with $C(\eps) = \exp(1/a - 1/\eps a_1) $.
 In addition,
by defining $\Lambda(0) = \partial_\eps \Lambda(0) =0$, we  obtain that this estimate is also  $C^1$ with respect to $\eps$ on $[0,\eps_M)$, for some $\eps_M>0$.
\end{enumerate}
\end{Remark}

\subsection{Existence of Solutions}\label{ss:existence}
In this subsection we combine the results of the previous subsections and prove Theorem \ref{t:existence}, which is stated in the introduction and reproduced below for convenience.

\begin{Theorem*}
Let $k \geq 2$ and $\sigma \in (0, 1)$ and consider a function $g \in H^k_{r,\sigma}(\R^2)$ satisfying Hypothesis \ref{h:g}. Then, there exists a constant $\eps_0>0$ and a $C^1 ([0,\eps_0))$  family of eigenfunctions $\phi = \phi(r; \eps)$ and eigenvalues $\Omega= \Omega(\eps)$ that bifurcate from zero and solve
 the equation
\begin{equation}\label{e:far3}
\Delta_0 \phi - b(\partial_r \phi) ^2 - \eps g(r) + \Omega =0 \qquad r= |x| \in [0,\infty).
\end{equation}
Moreover, this family has the form
\[ 
\phi(r;\eps) = -\frac{1}{b} \chi(\Lambda r) \log( K_0(\Lambda r)) + \phi_1(r; \eps ) + \eps c , \qquad \Lambda^2 = b \Omega(\eps) \]
where
\begin{enumerate}[i)]
\item $c$ is a constant that depends on the initial conditions of the problem,
\item $K_0(z)$ represents the zeroth-order Modified Bessel function of the second kind,
\item $ \partial_r \phi_1 \in H^{k}_{r,\sigma}(\R^2) $, and
\item $\Omega= \Omega(\eps) = C(\eps) 4 e^{-2 \gamma_\eps}\exp[ 2/ a ]$, with
\[a = -\eps b \int_0^\infty g_c(r) \;r \;dr,\]
and $C(\eps)$ a $C^1$ constant that depends on $\eps$.
\end{enumerate}
\end{Theorem*}

\begin{proof}
The proof mimics the analysis done for the far field approximation, except that now we consider the full equation
\eqref{e:far3}. As above, we use the ansatz $\phi(r) = \phi_0(r) + \phi_1(r)$, with
 $\phi_0$ given by
\[ \phi_0(r) = -\frac{1}{b} \chi(\Lambda r) \log( K_0(\Lambda r)), \qquad \Lambda^2 = b \Omega >0.\]
In contrast to the analysis from Section \ref{s:far}, here we treat the parameter
$\Lambda$ as a $C^1$ function of $\eps$, 
a result that follows from the matched asymptotic analysis of 
Subsection \ref{ss:match}.
Thus, given any $\eps>0$ there is a corresponding value of $\Lambda$ 
that defines an approximation, $\phi_0$, and a frequency, $\Omega = \Lambda^2/b$, both of which satisfy the equation
 $( \Delta_0 \phi_0 - b(\partial_r \phi_0)^2 + \Omega) =0$ in the far field.
 
Inserting this ansatz into equation \eqref{e:far3} gives
\[ \Delta_0 \phi_1 - 2b \partial_r \phi_0 \partial_r \phi_1 - b(\partial_r \phi_1)^2 
+ ( \Delta_0 \phi_0 - b(\partial_r \phi_0)^2 + \Omega) - \eps g =0.\]
Letting $\psi = \partial_r \phi_1$, adding and subtracting the term $2 \Lambda \psi$,
and precondition the result by $\mathcal{L}_{2 \Lambda}^{-1}$, gives the following equivalent
formulation of equation \eqref{e:far3},
\begin{equation}\label{e:ift2}
F(\psi; \eps) =  \mathrm{Id} +  \mathcal{L}_{2 \Lambda}^{-1}  \Big[ - 2b \partial_r \phi_0 \psi - b\psi^2 
+ ( \Delta_0 \phi_0 - b(\partial_r \phi_0)^2 + \Omega) - \eps g - 2\Lambda \psi \Big] =0. 
\end{equation}
Our goal is to show that the operator 
$F: H^k_{r,\sigma}(\R^2) \times \R  \rightarrow H^{k}_{r, \sigma}(\R^2)$ 
satisfies the conditions of the implicit function theorem.

By Remark \ref{r:Omega}, 
$\Lambda(0) =\partial_\eps \Lambda(0) =0$, so that the operator $F$ is 
$C^1([0,\eps_M))$ with respect to $\eps$, for some $\eps_M>0$. Moreover, thanks to the cut-off function 
 in the definition of $\phi_0$, i.e. $\chi= \chi(\Lambda r)$,
we find that the terms 
$ ( \Delta_0 \phi_0 - b(\partial_r \phi_0)^2 + \Omega)$ tend to zero as $\eps$ goes to zero.
Therefore, $F(0;0) = 0$.
In addition, because the elements in the parenthesis are smooth and have compact support, they belong to the space $H^{k-1}_{r, \sigma}(\R^2)$, for any  natural number $k$ and any real number $\sigma$. 
A similar analysis as in the proof of Theorem \ref{t:far} then shows that the rest of 
the terms in $F$ belong to the space $H^k_{r, \sigma}(\R^2) $, with $k \geq 2,$ and $ \sigma \in (0,1)$.
As a result, the operator $F$ is also well defined.
Since
 its Fr\'echet derivative,  $D_\psi F(0;0)$, is now the identity map on $H^k_{r,\sigma}(\R^2)$,
we may apply the implicit function theorem 
to conclude the existence of solutions $\psi = \partial_r \phi \in H^k_{r,\sigma}(\R^2)$. The results of Theorem \ref{t:existence} 
then follow in a similar way as those done in Section \ref{s:far}. 
\end{proof}

\section{Simulations}\label{s:simulations}

In this section we numerically explore the effects of adding large inhomogeneities, $\eps g$, as perturbations to the eikonal equation, i.e.
\begin{equation}\label{e:eiknum}
\phi_t = \Delta \phi - | \nabla \phi |^2 - \eps g.
\end{equation}
To run the simulations we model the equation on a square domain   with periodic boundary conditions and employ a spectral RK4 method based on \cite{kassam},  using a mesh size $h=100/512$  and a time step $dt =0.5$. The numerical scheme is continued until a steady state is reached.  Different domain lengths were tested, ($L = \{100,120,140, 160,180, 200\}$), resulting in the same approximations for $\phi$. Thus, a domain of length $L =100$ was chosen to run all numerical experiments for computational efficiency.

  \begin{figure}[t] 
     \centering
     \begin{subfigure}{0.4\textwidth}
     \includegraphics[width=\textwidth]{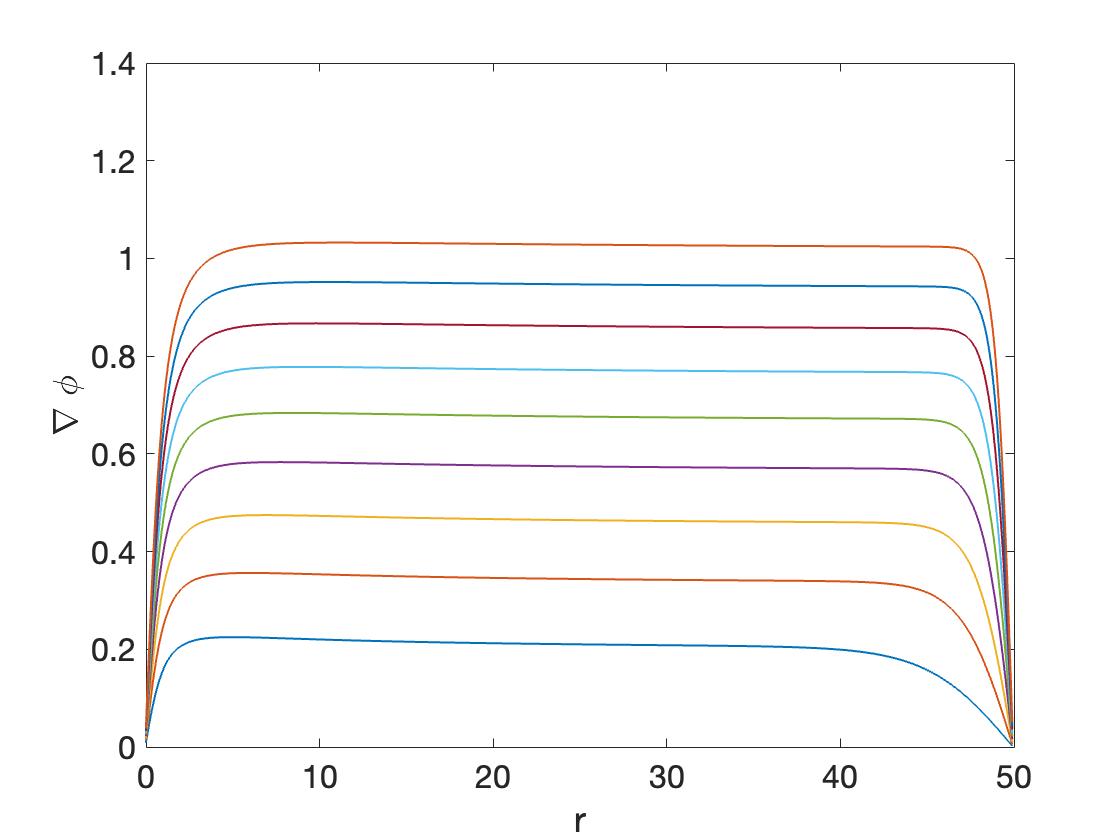} 
     \caption{}
     \label{f:waveA}
     \end{subfigure}
     \begin{subfigure}{0.4\textwidth}
     \includegraphics[width=\textwidth]{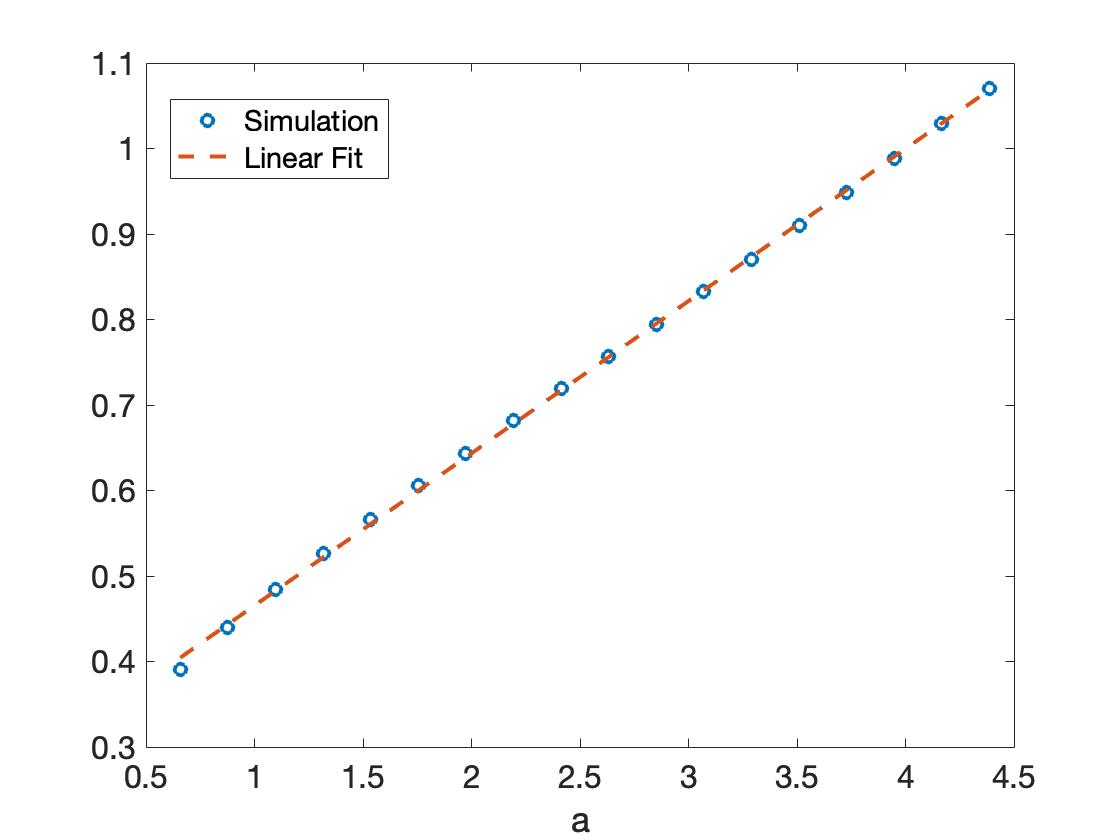} 
     \caption{}
     \label{f:waveB}
     \end{subfigure}
     \caption{ Numerical simulation of the time dependent eikonal equation \eqref{e:eiknum} with $g$ as in \eqref{e:g1g2}, $p =0.8$, initial condition $\phi=0$,  and various values of $\eps \sim a =0.15 * (3:2:20)$, where $a =\eps \int_0^3 g(r) r \;dr$. A) Plots of the gradient of the steady state solution. Top most curve corresponds to maximum value of $\eps$ used. 
     B) Plot of $\frac{1}{\log(k(a)) -1}$ vs. $a$, where for large $|x|$ the gradient $\nabla \phi$ approximates the wavenumber, $k(a)$.Circles represent data from the simulation while dotted line is the linear fit.}
     \label{f:wave}
  \end{figure}

Simulations confirm our analytical results,  finding that for inhomogeneities 
that take  the form
 \begin{equation}\label{e:g1g2}
  g = \left(  \frac{A}{( 1 +r^2)^{p}} \right), \quad p \in (1/2, 1],\quad A \in \R,
  \end{equation}
 the solutions to the eikonal equation grow linearly at infinity.  
  This is depicted in Figure \ref{f:waveA} where the gradient, $\nabla \phi$, is plotted 
 for different values of the parameter $\eps$. Notice that because we are using periodic boundary conditions,
 the value of $\nabla \phi$ goes to zero at the boundary of the domain.
  As predicted by the analysis of the previous sections, 
  we find that  the wavenumber, $k = \lim_{|x| \to \infty} \nabla \phi  \sim \Lambda/b $, and as a result 
  the frequency, $\Omega = \Lambda^2/b$, is small beyond all orders of $\eps$. 
  To confirm this result we approximate the wavenumber
  by evaluating the gradient $\nabla \phi$ at large values of $|x|$.
 In  Figure \ref{f:waveB} we plot the relation $\frac{-1}{\log(k) -1}$ vs. $a$,
 where $a$ represents the mass  of $g_c = (1- \chi)g$, 
 which we take as a substitute for $\eps$, since $a = -b \eps \int g_c(r) r \;dr$. 
  Notice how in the figure  the data points taken from the simulations follow a straight line, confirming  that $\Lambda \sim \exp(1/a)$.

Finally, to determine how the the decay rate, $p$, affects the wavenumber, we ran simulations for values of $p \in (0.5, 3]$. Notice that using the notation from Hypothesis \ref{h:g}, where $g \sim 1/r^m$, this is equivalent to considering values of $ m \in (1, 6)$.
These results are summarized in Figure \ref{f:powerA}.
They show that the wavenumber decreases as the decay rate of the inhomogeneity, $p$, increases. 
The figure also compares the numerical approximation to the wavenumber, $k$, 
which we plot using stars, with the analytical result $k  \sim \exp(1/a)$. In particular, following 
 Theorem \ref{t:existence} we use
 
\[ a = \left \{ \begin{array}{c c c }
-\eps b \int_0^3 g(r) \;r\;dr & \mbox{for} & p \in (1/2,1)    \\
-\eps b \int_0^\infty g(r) \;r\;dr & \mbox{for} & p \in (1,3) .
\end{array}
\right.\]
For  values of $p \in (0.5,1) \sim m \in (1,2)$, we are in the regime considered in this paper, 
where the impurity $g$ does not have finite mass
and is thus a large inhomogeneity. In this case, we assume a value of $D \sim 3$ in the definition of 
$g_c$ specified in the introduction, see equation \eqref{e:g}.
  We then calculate the mass of this function 
by integrating from 0 to 3, since this provided the best fit to the data (see Remark \ref{r:D}).
This approximation is plotted using a dashed line.
On the other hand, when $p \in (1,3)$ we are in the regime where the impurity, $g$, has finite mass and
the results from \cite{jaramillo2018} apply (see also Theorem 1 and Remark \ref{r:impurity}).
In this case, the value of $a = - \eps b \int_0^\infty g(r) \;r\;dr$.
This approximation is plotted using  a solid line. 

Notice that both approximations for the wavenumberm, $k$, do a good job of following the data in the respective regions of the $p$-axis 
where they are valid, i.e. $0.5< p < 1$ for the dashed line, $p>2$ for the solid line. However, the estimates 
for $p \in (1,2)$ using the mass of $g$ (solid line) are not accurate, even though they follow the results from Theorem 1 in \cite{jaramillo2018}, or equivalently,
Theorem 1 together with Remark \ref{r:impurity} stated in this paper. This is not unreasonable given that the frequency of the pattern, $\Omega$,
and as a result its wavenumber, $k$, are both small beyond all orders of the parameter $\eps$. 
In particular,
when $p \to 1$ we have that $a = -\eps b \int g(r) r\;dr \to -\infty$.
Because $a \sim \rmO(\eps)$, the estimates for $\Omega \sim \exp(1/a) $
become worse and worse, and in this case one needs to approximate $a$ to higher orders in $\eps$ to obtain better estimates. 
Figure \ref{f:powerA} then suggests that the interval $1<p<2$ is a transitional 
regime, where one can numerically obtain a better fit to the data by using a cut-off function to better
approximate the value of $a$.

Finally, we also confirm numerically that for values of $p\leq 0.5$ the 
inhomogeneity no longer produces target patterns, but rather solutions with $\nabla \phi \sim \rmO(r)$ at infinity, see Figure \ref{f:powerB}. This is not a tight bound on the growth rate of $\nabla \phi$ and is just a very rough estimate based on our numerical experiments. 

 \begin{figure}[t] 
   \centering
    \begin{subfigure}{0.4\textwidth}
   \includegraphics[width=\textwidth]{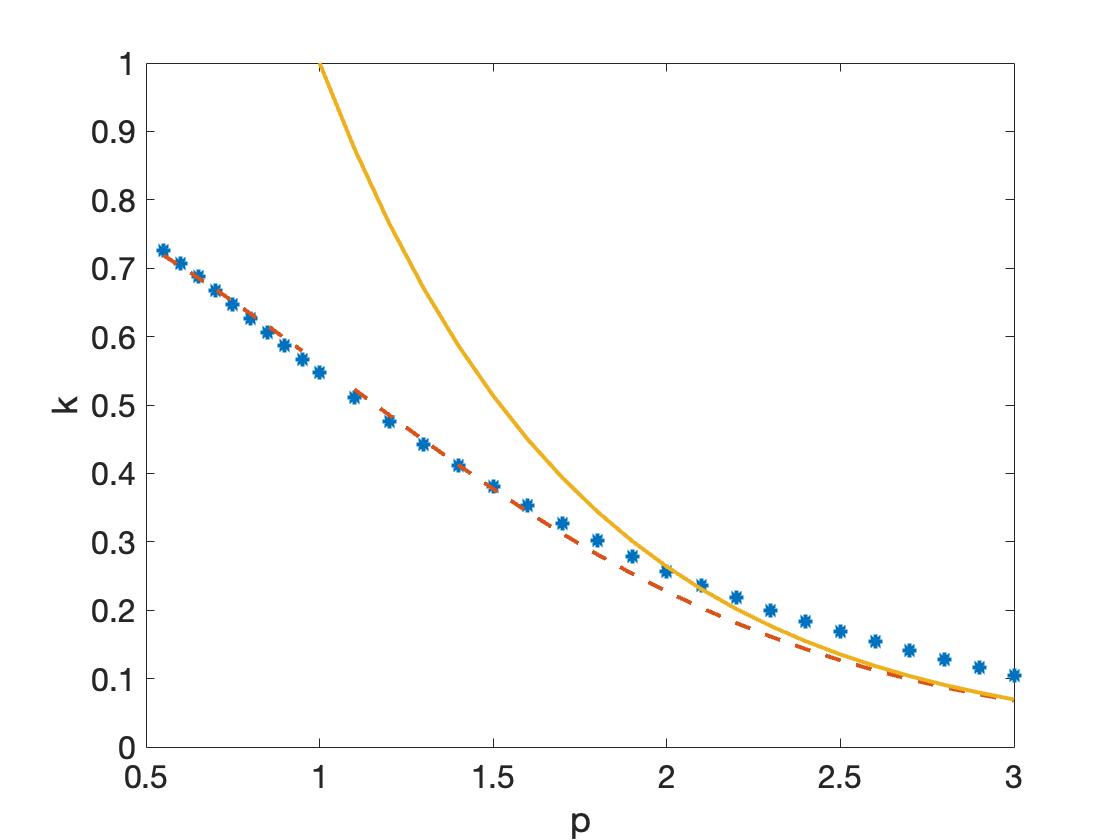} 
   \caption{}
   \label{f:powerA}
   \end{subfigure}
    \begin{subfigure}{0.4\textwidth}
   \includegraphics[width=\textwidth]{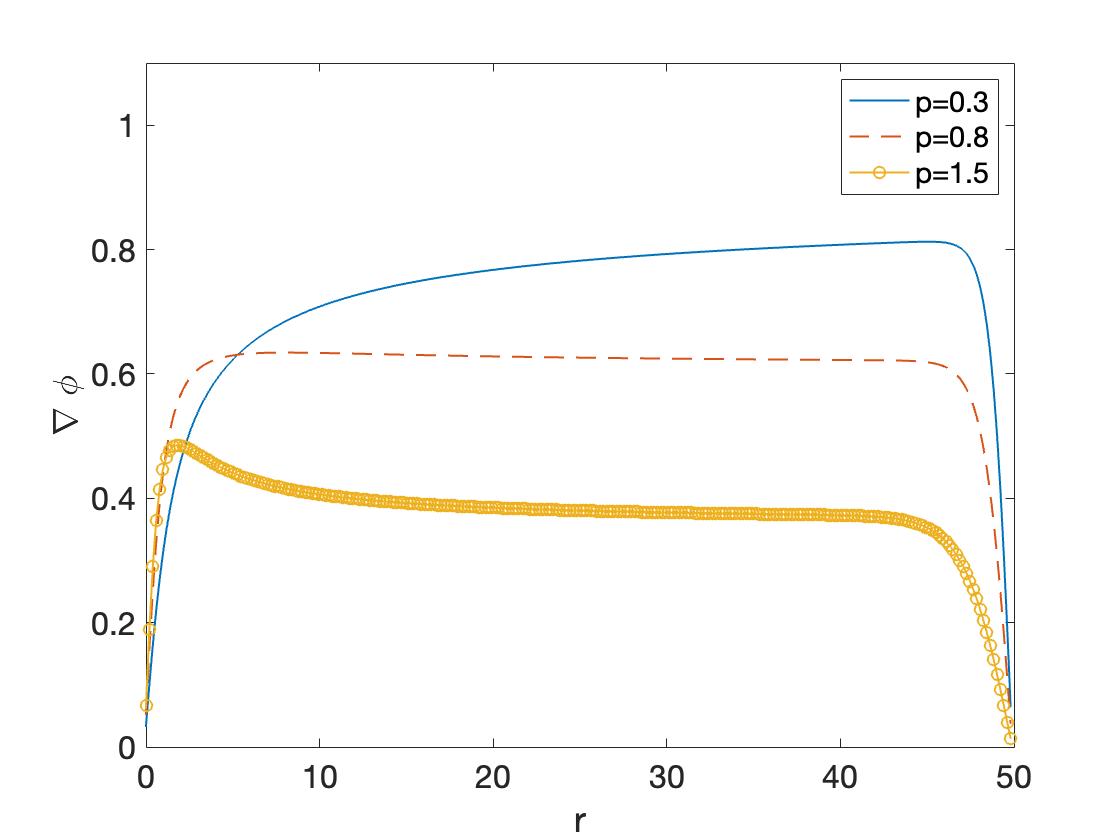} 
   \caption{}
   \label{f:powerB}
  \end{subfigure}
   \caption{\small A) Plot of wavenumber $k$ vs. $p$ for steady state solutions of the eikonal equation using $g=\dfrac{A}{(1+r^2)^p}$ with $A = 1.5$ and  for $ p \in (0.5, 3)$. Stars represents results from simulation, while solid and dashed lines represents approximation with $k \sim \exp(-1/a)$,  with $a= -A/(2-2p)$ for solid line, $a = A \int_0^3 g(r) \;rdr $ for dashed line.
   B) Plot of $\nabla \phi$ vs. $r$, for values of $p=0.3,0.8,1.5$. }
   \label{f:power}
\end{figure}

\section{Discussion}\label{s:discussion}

In this paper we showed that large defects can generate target patterns in oscillatory media.
Under the assumption of weak coupling, we modeled such systems using
a viscous eikonal equation, and represented the defect as a localized inhomogeneity.
In contrast to previous results, which assume that the inhomogeneity is strongly localized,
in this paper we relaxed this assumption and described impurities as functions with algebraic
decay of order $\rmO(1 /|x|^m)$, $1< m \leq 2$.

Our main motivation for studying this problem came from 
the universality of the viscous eikonal equation as a model for the phase dynamics 
of coherent structures in oscillatory media.
 In particular, our interest stems from the fact that this same equation can be 
used to describe the phase dynamics of spiral waves in oscillatory media with nonlocal coupling.
In this context, the large inhomogeneity no longer represents a defect, but instead encodes information about
variations in the amplitude of the pattern. 

A second motivation came from the fact that the steady state viscous eikonal equation is conjugate
to a Schr\"odinger eigenvalue problem. 
Indeed, it is well known that the Hopf-Cole transformation maps target pattern solutions
to bound states of the corresponding Schr\"odinger operator,
and that the frequency of target pattern solutions then corresponds to the energy of these states. 
In this context, the results presented here expand the conditions on the
Schr\"odinger potential that allow for such bound states to exist.
In particular, we show that Schr\"odinger operators with potentials that decay sufficiently fast  at infinity 
can have bound states even when the mass  of the potential $\int_{\R^2} g(r) \;r\;dr$ 
is not finite.

In particular, our analysis provides a first order approximation for target pattern solutions and
for their frequency.
In agreement with simulations  we show that, just as in the case of small defects,
the frequency is small beyond all orders of the small parameter
used to describe the strength of the impurity.
As a result, solutions do not follow a regular expansion. 
Therefore, to obtain our results we first found intermediate and far field approximations
 to the steady state viscous eikonal equation.
Then using a matched asymptotic analysis we were able to determine the value of the frequency selected
by the system. 
This approach is similar in spirit to the one used to prove existence of target patterns and spiral
waves in reaction-diffusion equations using spatial dynamics, \cite{scheel1998, kollar2007}.
There, the modeling equations are viewed as a system of ordinary differential equation in the radial variable, and
a center manifold reduction is used to obtain a vector field describing the amplitude
of these patterns. Coherent structures then correspond to heteroclinic solutions, connecting
a fixed point at infinity with solutions that are bounded near the origin.
Our matching process is then equivalent to showing that the center-stable
manifold of the fixed point intersects transversely the solution curve 
that lives in the center manifold.

Finally, the analysis presented in this paper is  complemented by simulations of the viscous eikonal
equation. Our numerical experiments are in good agreement with simulations.
They confirm that the wavenumber, and therefore the frequency of target patterns, do not follow a 
regular expansion on the small parameter $\eps$ representing the strength of the impurity $g$.
They also confirm that when $m \leq 1$, the solutions to the 
viscous eikonal equation no longer represent target patterns, since in this case
the gradient $\nabla \phi$ does not approach a constant as $|x| \to \infty$.

\section{Appendix}\label{a:A}

In \cite{jaramillo2022} it was shown that the following amplitude equation governs the dynamics of one-armed spiral waves in nonlocal oscillatory media,
\[ 0 = \beta \Delta_1 w + \lambda w + \alpha |w|^2w + N(w,\eps), \quad r\in [0,\infty).\]
Here $w$ is a radial and complex-valued function, and
\[  \beta = ( \sigma - \eps \lambda), \quad \lambda, \alpha \in \C,\quad  N \sim \rmO(|\eps||w|^4). \]
It was also established in \cite{jaramillo2022} that the constant $\lambda_I$ is an unknown parameter that needs to be determined when solving the equation.

In this section a multiple-scale analysis is used to derive a steady state viscous eikonal equation from the above
expression. We will see that this eikonal equation is of the form considered in this paper and that it involves an inhomogeneity
that decays at order $\rmO(1/|x|^2)$.

 To accomplish this task we first let $w = A \tilde{w}$, with $A^2 =- \lambda_R/\alpha_R$. This change of variables is done for convenience and leads to the following equation,
\[ 0 = \beta \Delta_1 \tilde{w} + \lambda \tilde{w} + (-\lambda_R + \rmi \tilde{\alpha}_I) |\tilde{w}|^2\tilde{w} + N(\tilde{w},\eps), \qquad \tilde{\alpha}_I = - \alpha \lambda_R/\alpha_R.\]
Letting $\tilde{w} = \rho \rme^{\rmi \phi}$ and separating the real and imaginary parts of the above expression, one finally
 obtains the system
\begin{eqnarray}\label{e:real}
0 = & \beta_R \left[ \Delta_1 \rho - (\partial_r \phi)^2 \rho \right] - \beta_I \left[ \Delta_0 \phi \rho + 2\partial_r \phi \partial_r \rho \right] + \lambda_R \rho - \lambda_R \rho^3 + \mathrm{Re}\left[ N(\tilde{w}; \eps) \rme^{-\rmi \phi} \right]\\ \label{e:imag}
0 = &  \beta_R \left[ \Delta_0 \phi \rho + 2\partial_r \phi \partial_r \rho \right] + \beta_I \left[ \Delta_1 \rho - (\partial \phi)^2 \rho \right] + \lambda_I \rho + \tilde{\alpha}_I \rho^3 + \mathrm{Im}\left[ N(\tilde{w}; \eps) \rme^{-\rmi \phi} \right].
\end{eqnarray}

Next, we proceed with a perturbation analysis following \cite{doelman2009}.
We rescale the variable $r$ by defining $S = \delta r$, where $\delta$ is assumed to be a small positive parameter. We also use the following expressions for the unknown functions:
\[\begin{array}{r l c l }
\rho = & \rho_0 + \delta^2 (R_0 + \delta R_1), &   \rho_0= \rho_0(r),&  R_i = R_i(\delta r) \quad i=0,1\\
\phi = & \phi_0 + \delta \phi_1, & & \phi_i = \phi_i(\delta r) \hspace{0.5cm}  i =0,1.
\end{array}\]
And for the parameter we choose
$ \lambda_I = -\tilde{\alpha}_I + \delta^2 \tilde{\lambda}_I,$ with $\tilde{\alpha}$ as above and $\tilde{\lambda}_I$ a 
free parameter.

Inserting the above ansatz into the equations \eqref{e:real} and \eqref{e:imag} we obtain a set of equations in powers of
$\delta$. To write this equations more compactly, we use the subscript $S$ to distinguish operators that are applied to functions that depend on this variable, i.e. $\Delta_{0,S}$. The absence of this subscript indicates that the operator is applied to a function of the original variable $r$.

At order $\rmO(1)$ we find that $\rho_0$ must satisfy,
\begin{align*}
0 = & \beta_R \Delta_1\rho_0 + \lambda_R \rho_0 - \lambda_R \rho_0^3,\\
0 = & \beta_I \Delta_1 \rho_0 - \tilde{\alpha}_I \rho_0 + \tilde{\alpha}_I \rho_0^3.
\end{align*}

At the next order, $\rmO(\delta^2)$, we find two equations involving $R_0$ and $\phi_0$,
\begin{align*}
0 = & - \beta_I \rho_0 \Delta_{0,S} \phi_0  - 2\beta_I \partial_S \phi_0 \partial_S \rho_0  - \beta_R \rho_0 (\partial_S \phi_0)^2   + \lambda_R R_0( 1- 3 \rho_0^2),\\
0 = & \beta_R \rho_0 \Delta_{0,S} \phi_0 + 2\beta_R \partial_S \phi_0 \partial_S \rho_0  -\beta_I \rho_0 (\partial_S \phi_0)^2 +
\tilde{\alpha}_I R_0 ( 3 \rho_0^2-1) + \tilde{\lambda}_I \rho_0 .
\end{align*}
For our purposes, it is enough to stop at this stage and not list higher order terms.

We first focus on the order $\rmO(1)$ system. The first equation can be solved, provided $\beta_R, \lambda_R>0$.
This equation falls into a broader family of o.d.e. which were solved in \cite{Kopell1981spiral}.
In this reference, the authors showed that such equations posses a unique solution $\rho_*$ satisfying
\[ \rho_* \to 1 \quad \mbox{as} \quad r\to \infty, \qquad \rho_*(r) \sim br \quad \mbox{when} \quad r \sim 0\]
Of course, the solution $\rho_*$ would not satisfy the second equation in the system. So we let
\[ G = \beta_I \Delta_1 \rho_* - \tilde{\alpha}_I \rho_* + \tilde{\alpha}_I \rho_*^3 
= \left(\frac{\beta_I}{\beta_R} \lambda_R + \tilde{\alpha}_I \right) \rho_* ( \rho_*^2-1),\]
and add these terms to the order $\rmO(\delta^2)$ system.

\begin{figure}[t] 
   \centering
   \includegraphics[width=3in]{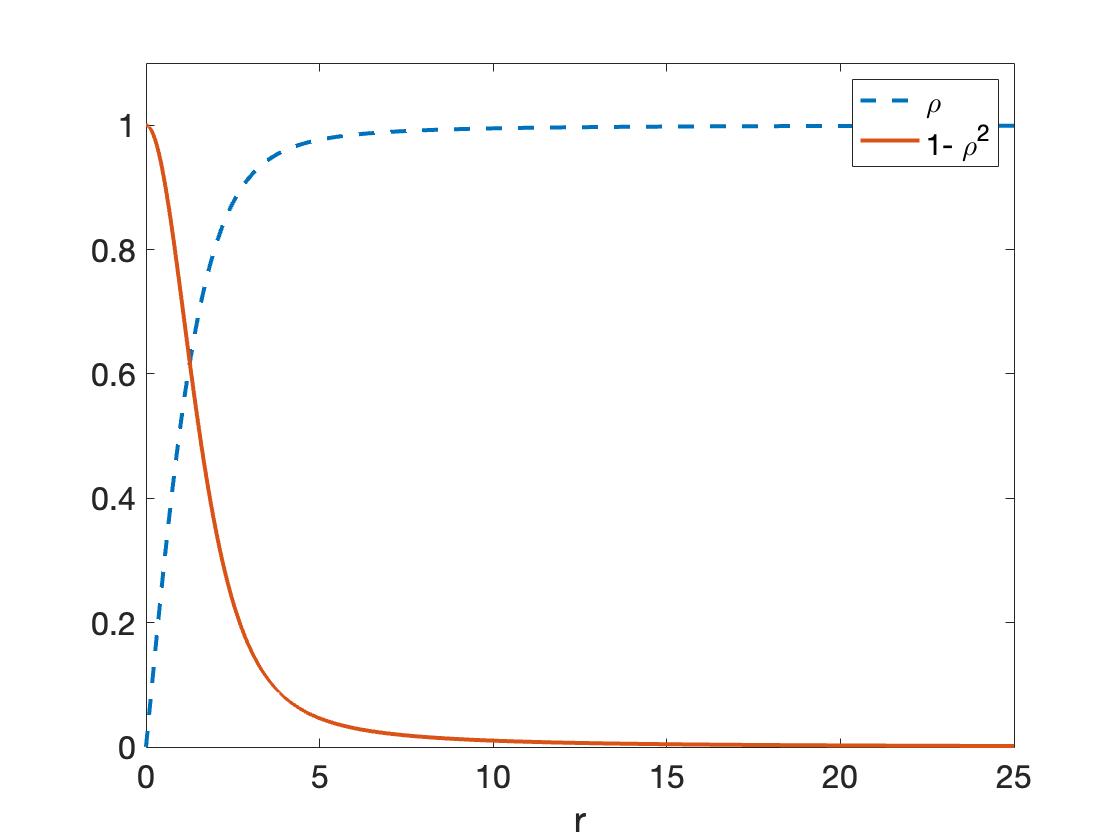} 
   \includegraphics[width=3in]{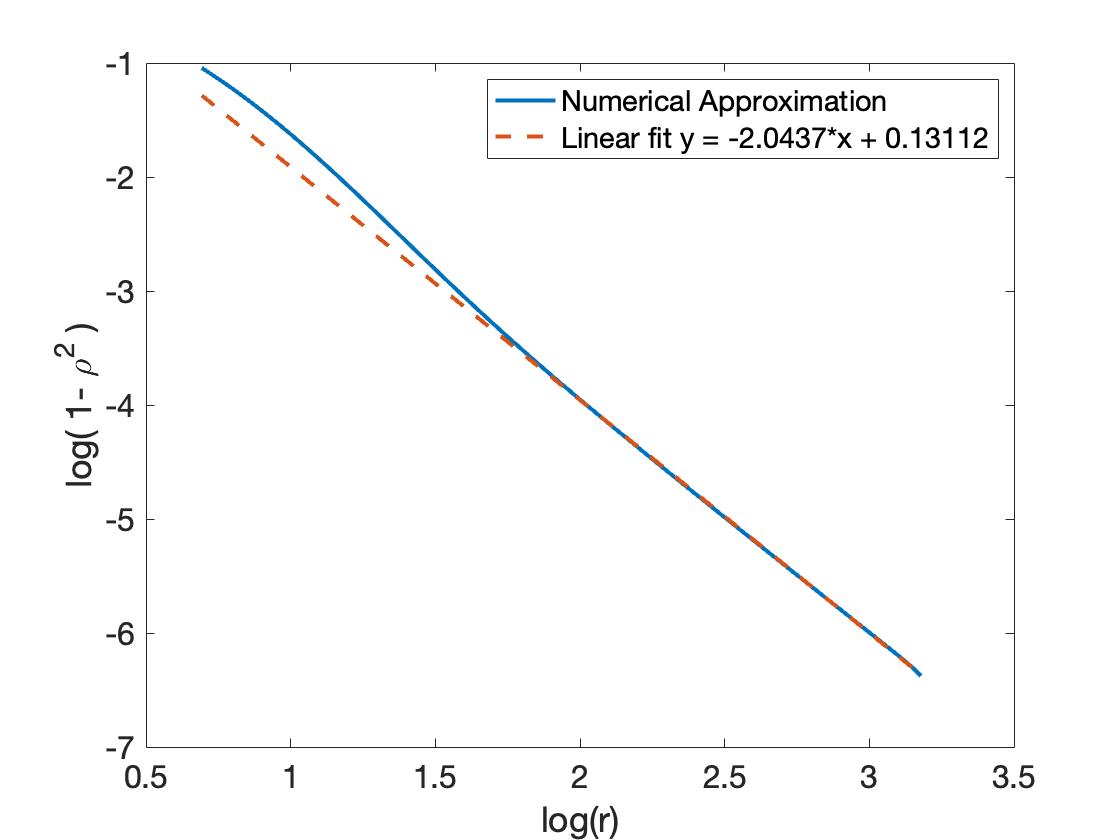} 
   \caption{Solution to the boundary value problem \eqref{e:bvp}}
   \label{f:rho}
\end{figure}

Going back to the order $\rmO(\delta^2)$ system, we first notice that because $\rho_0 =\rho_* \sim br = bS/\delta$ near the origin,
then the terms that involve this variable are in fact 'large' when compared to the terms that do not. 
Concentrating only on these large terms, we find that in the first equation we can solve for $R_0$ in 
terms of the variable $\phi_0$. Inserting this result into the second equation gives us
the viscous eikonal equation, 
\[ \Delta_{0,S} \phi_0 - b (\partial_S \phi_0)^2 +\Omega - c g=0\]
as expected, where
\[ b = \frac{\beta_I \lambda_R - \beta_R \tilde{\alpha}_I}{\tilde{\alpha}_I \beta_I + \lambda_R \beta_R},\qquad
\Omega = \frac{\tilde{\lambda}_I\lambda_R}{\tilde{\alpha}_I \beta_I + \lambda_R \beta_R}, \qquad
 c = -\frac{ \beta_I \lambda_R + \tilde{\alpha}_I \beta_R}{ \beta_R( \tilde{\alpha}_I \beta_I + \lambda_R \beta_R)},\qquad
 g = (1 - \rho_*^2 ).\]

Numerical simulations show that the perturbation $g$ decays at order $\rmO(1/r^2)$ as $r$ goes to infinity, see Figure \ref{f:rho}. To obtain these results, we solved the boundary value problem 
\begin{equation}\label{e:bvp}
0 =  \partial_{rr} \rho + \frac{1}{r} \partial_r \rho - \frac{1}{r^2} \rho +  \rho - \rho^3,\qquad \rho(\infty) = 1, \quad \rho(0) =0,
\end{equation}
treating the equation as a system of o.d.e. and using a shooting method with condition
\[ \rho(r) \sim br \quad \mbox{when} \quad r \sim 0.\]

\section{Declarations}
{\bf Conflict of Interest:} The author declares that she has no conflict of interest.
\bibliographystyle{plain}
\bibliography{spirals}

\end{document}